\newcommand{\be}{\begin{eqnarray}}
\newcommand{\ee}{\end{eqnarray}}
\newcommand{\C}{\mathbb{C}}
\newcommand{\R}{\mathbb{R}}
\newcommand{\e}{\mathrm{e}}
\def\diag{{\rm diag}}
\def\Tr{{\rm Tr}}
\DeclarePairedDelimiter{\ceil}{\lceil}{\rceil}
\DeclarePairedDelimiter{\floor}{\lfloor}{\rfloor}
\newcommand{\beq}{\begin{equation}}
\newcommand{\eeq}{\end{equation}}
\newcommand{\bmat}{\begin{displaymath}}
\newcommand{\emat}{\end{displaymath}}
\def\1{{\bf 1}}
\newtheorem{theorem}{Theorem}[section]
\newtheorem{pro}{Proposition}[section]
\newtheorem{corol}{Corollary}[section]
\newtheorem{Exa}{Example}[section]
\newenvironment{proof}[1][Proof]{\noindent\textbf{#1.} }{\ \rule{0.5em}{0.5em}}
\begin{document}

\title{ Krein Space Numerical Range of Block Matrices - a Unified Approach to the Hyperbolic Case}




\begin{center}
{\Large Krein Space Numerical Range of Block Matrices - a Unified Approach to the Hyperbolic Case}

\smallskip

N. Bebiano\footnote{CMUC and  Mathematics Department,
 University of Coimbra,
3001-501 Coimbra (bebiano@mat.uc.pt)}, R. Lemos\footnote{CIDMA, Department of Mathematics, University of
Aveiro, Aveiro, Portugal (rute@ua.pt)} and G.
Soares\footnote{CMAT-UTAD, Department of  Mathematics, University of
Tr\'as-os-Montes  e Alto Douro,  
 Vila Real, Portugal (gsoares@utad.pt)}
 
 \end{center}
\begin{abstract} 
In this paper,  we  investigate the Krein space numerical range of $2$-by-$2$ block matrices, with diagonal blocks as scalar multiples of the identity. For these matrices, we specifically investigate the cases when the respective boundary generating curves consist of hyperbolas. This provides a unified 
approach to derive established and new results concerning the numerical range hyperbolic shape.
\end{abstract}

{\bf subjclass} {Primary 47A12; Secondary 15A60}

{\bf Keywords:} {Numerical range, Krein space, Indefinite Kippenhahn curve, Hyperbolical Range Theorem,  Block matrix}


\section{Introduction}
 
 As usual, let $M_n$ stand for the associative algebra of
$n\times n$ complex matrices and let $I_n$ denote the identity matrix.
Consider the complex vector space ${\bf C}^n$ equipped with the Krein space
structure  endowed by the {\it indefinite inner product}  
$[x,y]_J = y^*J\,x$, for $x, y \in {\bf C}^n$, with $J\!= I_r \oplus -I_{n-r}$, $0\leq r\leq n$ (for references on Krein spaces see e.g.\! \cite{azizov, LR}).  In general, we may consider 
a non-singular Hermitian  matrix $H \in M_n$ instead of $J$. 
 The {\it Krein space numerical range},  also called the {\it indefinite numeri\-cal range},  
of $A\in M_n$ is  defined as
$$W^H(A)\,=\,-W^{H}_-(A)\,\cup\, W^H_{\,+}(A)$$
with
$$
  W_{\pm }^H(A)\,=\,\big\{[Ax,x]_H:\ x\in\C^n, \ [x,x]_H={\pm 1}\big\}.
$$
It is clear that $W^{-H}_{\,+}(-A)\,=\, W^H_-(A)$. 

 If $H=I_n$, then $W^H(A)=W^H_+(A)$ reduces to the well-known classical {\it numerical
  range},  introduced a century ago   by    Toeplitz \cite{Toep} and Hausdorff   \cite{Haus},  and extensively investigated thereafter (see e.g.  \cite[Chapter 1]{HJTopics}  and references therein). 
The set $W^H(A)$ has also been researched and applied by some authors (see e.g. \cite{BLPS SIAM Proc, BLPS,  BNLPS2005, BNLPS, BPNPS2015, BLS_Hyp, BLSlast, ELA, ricardo0, ricardo, ELA quadr, GLS, LR PAMS, LR ELA3, CTU, PT, BLP2004}). 

 It is well-known that $W(A)$ contains the spectrum of $A$. In the present context for
$$
  \sigma^H_{\pm}(A)\,=\,\big\{\lambda\,{\in\mathbb C}:\ \exists\ x\in\C^n,\ [x,x]_H=\pm 1, \ A\,x=\lambda\, x\big\},
$$ 
the following inclusions hold: $\sigma^H_{+}(A)\subset  W^J_{+}(A)$ and $\sigma^H_{-}(A)\subset - W^H_{-}(A).$
 

Both sets $W_+^H(A), W_-^H(A)$ are convex, nevertheless they may not be closed or bounded~\cite{CTU}. On the other hand, $W^H(A)$  is {\it pseudo-con\-vex}, that is, for any pair of distinct  points $w_1,w_2\in W^H(A)$, either 
$W^H(A)$ contains the closed line segment 
$\{tw_1+(1-t)w_2: ~0\leq t\leq 1\}
$ 
or $W^H(A)$ contains the half-lines 
$
\{t w_1+ (1-t) w_2 : ~t\leq 0 ~ \hbox{or} ~ t\geq 1\}.
$

A matrix $A\in M_n$ is 
$H$-{\it Hermitian} if it equals  its $H$-{\it adjoint}, which is given by $A^{[*]}\!=H^{-1}A^*H,$
and  $W^H(A) \subseteq \mathbb R$ if and only of $A$ is $H$-Hermitian \cite{CTU}.

If $r$ is the number of positive eigenvalues of the matrix $H$, counting multiplicities, then by Sylvester law of inertia 
there exists $S\in M_n$ non-singular, such that $S^*HS= J$, where
 $J=I_r \oplus -I_{n-r}$ is the inertia matrix of $H$, and $W^H(A)=W^J(S^{-1}AS)$. 
We also remark that  $A$ is $H$-Hermitian if and only if $S^{-1}AS$ is $J$-Hermitian.
 Thus, without loss of generality, we may restrict our attention to the study of $W^J(A)$ and, for simplicity, the notation 
 $A^\#=JA^*J$ for the $J$-adjoint of $A$ is used.

Any matrix $A\in M_n$ admits the representation
$
 A=\Re^J(A)+i\,\Im^J(A),
$
where $$\Re^J(A) =\frac{A+A^\#}{2}\qquad \hbox{and}\qquad
\Im^J(A) =\frac{A-A^\#}{2i}$$
are $J$-Hermitian matrices. 
For  each angle $\theta\in \R$, we define the matrix
\begin{equation}\label{Htheta}
 H_\theta(A)\,=\, \Re^H(A) \cos\theta+\Im^H(A)\sin\theta,
\end{equation}
which is $J$-Hermitian, so it has real or complex eigenvalues, these occurring in complex conjugate pairs  \cite{LR}. If  $H_\theta(A)$ has 
non-real eigenvalues, then $W^J(H_\theta(A))$ is the whole real line \cite[Proposition~2.1]{BNLPS2005}. 

To avoid trivial cases of degeneracy of $W^J(A)$, we  will focus on the class ${\mathcal J}$ of matrices $A\in M_n$ such that $H_\theta(A)$ for $\theta$ in some real interval, has real eigen\-va\-lues, satisfying
\begin{equation}\label{sigma+}
\sigma^J_+\big(H_\theta(A)\big)=\{\lambda_1(\theta),\ldots,\lambda_r(\theta)\}, \quad \lambda_1(\theta)\geq\cdots\geq \lambda_r(\theta),
\end{equation} 
\begin{equation}\label{sigma-}
\sigma^J_-\big(H_\theta(A)\big)=\{\lambda_{r+1}(\theta),\ldots,\lambda_n(\theta)\}, \quad \lambda_{r+1}(\theta)\geq\cdots\geq \lambda_n(\theta)
\end{equation} 
and the elements in
$\sigma_+^J(H_\theta(A))$ and $\sigma_-^J(H_\theta(A))$ do {\it not interlace}, that is, 
either 
$\lambda_r(\theta)>\lambda_{r+1}(\theta)$ 
or
$\lambda_n(\theta)>\lambda_1(\theta)$.  
If the eigenvalues of $H_\theta(A)$ interlace, then $W^J(H_\theta(A))$ is also the whole real line. 


The characteristic polynomial  of $H_\theta(A)$, 
 that is, 
$$p_{A}(z, \theta) \,=\, {\rm det}\big(\Re^J\!(A)\cos\theta\,+\,\Im^J\!(A)\sin\theta\,-\,z\,I_n\big),$$
is called the {\it indefinite  generating polynomial} 
of $W^J(A)$ and its eigenvalues play a crucial role in our study (for details, see Section~2).  For $A\in M_n$, through the equation 
${\rm det}(u\,\Re^J\!(A)\,+\,v\,\Im^J\!(A)\,+\,w\,I_n)=0$, it is associated an  algebraic curve  of  class $n$, in homogeneous line
coordinates. Its real part, denoted by $C^J(A)$, is the {\it indefinite  boundary generating curve}, or {\it indefinite Kippenhahn curve}, of $W^J(A)$ and its
pseudo-convex hull gene\-ra\-tes the set $W^J(A)$ as follows. For any two points $w_1=[Ax,x]_J$ and $w_2=[Ay,y]_J$ in $C^J(A)$, we take the line segment joining them, if $[x,x]_J\,[y,y]_J>0$, or the  union of two half-rays given by $\{tw_1+(1-t)w_2: t\leq 0$ or $t\geq 1\}$, if $[x,x]_J\,[y,y]_J<0$ (see e.g.\! \cite{BLPS} for more information). 


In general, not too much is known on  the boundary of $W^J(A)$,  
except for the case $n=2$  and for  certain structured matrices, some of them with low order, when $W^J(A)$ is the pseudo-convex hull of a finite number of hyperbolas \cite{BLP2004,BNLPS2005, BLS_Hyp, BLSlast, ricardo,ricardo0, ELA}. In this paper, we are interested in investigating to which extent this shape persists.

 For $\alpha, \beta \in \mathbb{C}$, we will  focus on  matrices of the form
\begin{equation}\label{bloco}
A=\left[\begin{array}{cc}
\alpha I_{r}  & C\\
D & \beta I_{n-r}
\end{array}\right],\qquad \alpha\neq \beta,
\end{equation}
with at least a non-zero off-diagonal entry.
If $C=D=O$, then $W^J(A)$ is the  union of the  half-lines 
$
\{t\,\alpha+ (1-t) \beta: ~t\leq 0 ~ \hbox{or} ~ t\geq 1\},
$
with $\alpha\in W_+^J(A)$ and $\beta\in -W_-^J(A)$.

The remaining of this note is organized as follows. Section~\ref{Prereq} contains preli\-minary results used in subsequent discussions. In Sections~3, 4 and 5, 
counterparts of  classical numerical range results (e.g. \cite{Chien, OAM2013, Geryba, Li2011, Linden}) 
are presented in the context of indefinite inner product spaces, specifically in the cases when the indefinite boundary generating curve consists of hyperbolas.  
Previous known results concerning the shape of $W^J(A)$, when $C^J(A)$ contains a family of hyperbolas, 
are in particular reobtained from the main unified result in Section~3.  Most results are illustrated via specific numerical examples.
Section 4 is dedi\-cated to the case when arcs of hyperbolas and flat portions appear at the boundary of $W^J(A)$. Section 5
presents families of matrices with hyperbolic $W^J(A)$,  derived from the main result,  except for the last theorem.
Section 6 is applied to the Krein space numerical range of certain tridiagonal matrices.  
In Section~7, some  final comments are stated.

\smallskip

\section{Pre-requisites}\label{Prereq}

\smallskip

The  {\it Hyperbolical Range Theorem}  \cite{BLPS, BLS_Hyp} describes the indefinite numerical range in the $2$-by-$2$ case. It states that if $A\in M_2$ has  eigenvalues $\lambda_1, \lambda_2$ and $J=\diag(1,-1)$,  then 
$W^J(A)$ is bounded by the non-degenerate  hyperbola,
with foci at 
$\lambda_1, \lambda_2$, 
transverse and non-transverse axes of lengths
$$
  \left(\Tr(A^\#A)-2\Re(\lambda_1\bar\lambda_2)\right)^\frac{1}{2} 
  \qquad \hbox{and}\qquad 
  \left(|\lambda_1|^2+|\lambda_2|^2-\Tr(A^\#A)\right)^\frac{1}{2},
$$
respectively, if and only if 
\begin{equation}\label{inv cond hip}
  2\Re(\lambda_1\bar\lambda_2)<\Tr(A^\#A)< |\lambda_1|^2+|\lambda_2|^2. 
\end{equation}
Moreover,
 $W^J(A)$ is the disjoint union of two closed half-lines,  with endpoints at $\lambda_1, \lambda_2$,  on the line containing these points,
 if and only if 
 $$
 2\Re(\lambda_1\bar\lambda_2)<\Tr(A^\#A) = |\lambda_1|^2+|\lambda_2|^2.
 $$
The  reduction to a singleton occurs  if and only if $A$ is a scalar matrix.
Further, $W^J(A)$ may degenerate into  a line, eventually except a point, or
the whole complex plane, eventually  except a line (see \cite{BLPS SIAM Proc} for a detailed description). 


 The following elementary properties  will be used throughout. For
 $A \in M_n$, 
\begin{itemize}
  \item[P1.] $W^J(\alpha A+\beta I_n)\!=\alpha W^J(A)+\beta$ for any $\alpha, \beta\in \C$ ({\it translational property});
  \item[P2.]   $W^J(A)$ is $J$-{\it unitarily invariant}: 
      $W^J(U^\#AU)\!=\!W^J(A)$  for any $J$-uni\-tary $U\in M_n$, that is, $UU^\#=I_n$.
\end{itemize}

In the sequel, let $J=I_{r}\oplus -I_{n-r}$, $0<r<n$.
In order to state a criterion of non-degenerate hyperbolicity of $W^J(A)$ obtained in \cite{BLS_Hyp}, and which plays a key role in the present research,
 let $\Omega$ be the interval,  with greatest possible dia\-meter,  of angles $\theta$
such that $H_\theta(A) \in {\mathcal J}$. For simplicity, recalling \eqref{sigma+} and \eqref{sigma-},  we adopt the following unified notation: 
\begin{itemize} 
\item[(I)] $\lambda_{L} (\theta)=\lambda_{r+1}(\theta)$ and $\lambda_{R}(\theta)=\lambda_{r}(\theta)$ if $\lambda_r(\theta)>\lambda_{r+1}(\theta)$ holds for $\theta \in \Omega$;
\end{itemize}
\begin{itemize} 
\item[(II)] $\lambda_{L} (\theta)=\lambda_{1}(\theta)$ and $\lambda_{R}(\theta)=\lambda_{n}(\theta)$ if $\lambda_n(\theta)>\lambda_{1}(\theta)$ holds for $\theta \in \Omega$.
\end{itemize}
If $W^J(A)$ is a  hyperbolic disc with horizontal transverse axis, then
$x=\lambda_{L} (\theta)$ and $x=\lambda_{R} (\theta)$ are support  lines, respectively, of $-W^J_-(A)$ and $W^J_+(A)$, in case (I),
or of  $W^J_+(A)$ and $-W^J_-(A)$, in case (II).   
The envelopes of the two following family of lines: 
\[
\e^{-i\theta}(\lambda_R(\theta)+i\R)\,, \qquad \e^{-i\theta}(\lambda_L(\theta)+i\R),
\]
for $\theta$ ranging over $\Omega$, provide the characterization  of the boundaries of $W^J_{+}(A)$ and $-W^J_{-}(A)$, and ultimately of the set  $W^J(A)$. 

\medskip

\begin{theorem}\label{T2.11} 
\noindent {\bf (Criterion of hyperbolicity)}
Let $\widetilde{a},\widetilde{b}>0$ and $A\in M_n$. 
The set $W^J(A)$ is bounded by  the non-degenerate hyperbola centered at the origin, with horizontal transverse  
and vertical non-transverse semi-axes 
of length $\widetilde a$ and $\widetilde b$, respectively,  if and only if
\[
 \lambda_{R}\big(\theta\big)=\left(\widetilde a^2 - \widetilde c^2 \sin^2\theta\right)^{\frac{1}{2}}
  \qquad   \hbox{and} \qquad  
 \lambda_{L}\big(\theta\big)=-\left(\widetilde a^2 - \widetilde c^2 \sin^2\theta\right)^{\frac{1}{2}}
\]
where $\widetilde c^2= \widetilde a^2+ \widetilde b^2$,  for  $\theta \in (-\theta_0, \theta_0)$ with $\theta_0=\arctan(\widetilde a/\widetilde b)$. 
\end{theorem}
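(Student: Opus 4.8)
The plan is to characterize $W^J(A)$ via the envelope description recalled just before the statement, reducing the problem to identifying when the two families of support lines $\e^{-i\theta}(\lambda_R(\theta)+i\R)$ and $\e^{-i\theta}(\lambda_L(\theta)+i\R)$ have as their envelope the branches of a hyperbola centered at the origin with the prescribed semi-axes. First I would fix the canonical hyperbola $x^2/\widetilde a^2 - y^2/\widetilde b^2 = 1$ with $\widetilde c^2=\widetilde a^2+\widetilde b^2$, and compute its tangent lines: a standard computation shows that the tangent line to the right branch making angle $\theta$ with the vertical direction (equivalently, the line $\e^{-i\theta}(\ell(\theta)+i\R)$ for a suitable real function $\ell$) exists precisely when $\widetilde a^2-\widetilde c^2\sin^2\theta>0$, i.e.\ for $|\theta|<\theta_0=\arctan(\widetilde a/\widetilde b)$, and that its signed distance from the origin along the direction $\e^{-i\theta}$ equals $\left(\widetilde a^2-\widetilde c^2\sin^2\theta\right)^{1/2}$. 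The left branch gives the same family with the opposite sign. This pins down what $\lambda_R(\theta)$ and $\lambda_L(\theta)$ must be.

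Next I would run the argument in both directions. For the ``if'' direction, assume $\lambda_R(\theta)=\left(\widetilde a^2-\widetilde c^2\sin^2\theta\right)^{1/2}$ and $\lambda_L(\theta)=-\left(\widetilde a^2-\widetilde c^2\sin^2\theta\right)^{1/2}$ on $(-\theta_0,\theta_0)$; then by the envelope characterization the boundary of $W^J_+(A)$ is the envelope of the lines at oriented distance $\lambda_R(\theta)$ and that of $-W^J_-(A)$ is the envelope of the lines at oriented distance $\lambda_L(\theta)$, and by the tangent-line computation these envelopes are exactly the right and left branches of the stated hyperbola (with the roles of the two branches assigned according to cases (I) and (II)); invoking pseudo-convexity and the description of $W^J(A)$ as the pseudo-convex hull of $C^J(A)$ closes this direction. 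For the ``only if'' direction, suppose $W^J(A)$ is bounded by the non-degenerate hyperbola with horizontal transverse semi-axis $\widetilde a$ and vertical non-transverse semi-axis $\widetilde b$ centered at the origin; then the support lines $x=\lambda_R(\theta_{\text{rot}})$ and $x=\lambda_L(\theta_{\text{rot}})$ after rotation by $\theta$ must be tangent to the two branches, and reading off their distances from the origin via the same computation yields the stated formulas, while the requirement that these support lines actually exist (equivalently, that $H_\theta(A)\in\mathcal J$ with the non-interlacing alternative holding) forces $\theta$ to range over exactly $(-\theta_0,\theta_0)$.

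I would also record the small bookkeeping points: that $\lambda_R(\theta)>\lambda_L(\theta)$ on the open interval and they coalesce at $\pm\theta_0$, consistent with the hyperbola's tangent lines at the ``ends'' of the admissible angular range becoming the asymptote directions; that the even dependence on $\theta$ reflects the symmetry of the centered hyperbola about both axes; and that one must check $H_\theta(A)$ indeed lies in $\mathcal J$ for $\theta\in(-\theta_0,\theta_0)$, i.e.\ the eigenvalues are real and do not interlace, which is automatic from the explicit real formula for $\lambda_R,\lambda_L$ together with the assumption $\widetilde a,\widetilde b>0$.

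The main obstacle I anticipate is not any single hard estimate but rather handling the two cases (I) and (II) uniformly and correctly matching which eigenvalue branch ($\lambda_r$ vs.\ $\lambda_{r+1}$, or $\lambda_1$ vs.\ $\lambda_n$) plays the role of the support line of $W^J_+(A)$ versus $-W^J_-(A)$, since the geometry of which hyperbola branch bounds which half differs between the two cases; getting the signs and the orientation of the oriented-distance function consistent with the conventions in \eqref{Htheta}, \eqref{sigma+}, \eqref{sigma-} and with the envelope formulas is the delicate part, and I would treat case (I) in full detail and then indicate that case (II) follows by the analogous argument (or by an appropriate sign change $A\mapsto -A$, $J\mapsto -J$, using $W^{-J}_+(-A)=W^J_-(A)$).
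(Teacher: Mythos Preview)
The paper does not actually prove this theorem: it is stated in Section~\ref{Prereq} as a prerequisite result ``obtained in \cite{BLS_Hyp}'' and is invoked later without proof (the only hint is the parenthetical remark in the proof of Theorem~\ref{principal}, ``see the proof of Theorem~3.1 given in \cite[Theorem~2.1]{BLS_Hyp} for an analogous reasoning''). So there is no in-paper proof to compare against.

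Your proposed argument is correct and is precisely the natural one. The core computation is the dual-conic identity: the line $x\cos\theta + y\sin\theta = d$ is tangent to $x^2/\widetilde a^2 - y^2/\widetilde b^2 = 1$ if and only if $d^2 = \widetilde a^2\cos^2\theta - \widetilde b^2\sin^2\theta = \widetilde a^2 - \widetilde c^2\sin^2\theta$, which is positive exactly when $|\theta|<\arctan(\widetilde a/\widetilde b)$. This gives both directions at once, matching the support-line description of $\partial W^J_\pm(A)$ recalled just before the theorem. Your worry about cases (I) and (II) is already absorbed by the paper's unified notation $\lambda_R,\lambda_L$, so a single argument suffices; the sign-swap remark $W^{-J}_+(-A)=W^J_-(A)$ you mention is indeed how one would pass between the two cases if one insisted on unwinding the definitions.
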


\medskip




Without loss of generality, for the study of $W^J(A)$ for matrices $A$ of type (\ref{bloco}) 
we can assume throughout $n\leq 2r$.  Indeed, if $n>2r$, we can instead consider  $W^{\!J'}\!(A')$ for the matrices
 \begin{equation}\label{bloco1}
A' = \left[\begin{array}{cc}
  \beta I_{n-r}  & D\\
  C & \alpha I_{r}
\end{array}\right] \qquad \hbox{and}
\qquad J' =I_{n-r}\oplus -I_r;
\end{equation}
becasuse by an adequate permutation matrix $P$, we have
$A' = P^TA\,P$ and $J'= P^T(-J)\, P,$ 
which guarantees the equalities: 
$$W^J(A)\,=\,W^{-J}(A)\,=\,W^{J'}\!(A'),$$
although interchanging the roles of the convex components, according to  $$W_+^J(A)= -W_-^{J'}(A') \qquad \hbox{and} \qquad -W_-^J(A)= W_+^{J'}(A').$$

For matrices  of the form {\rm (\ref{bloco})}, the next result gives pertinent information on the eigenvalues of the $J$-Hermitian matrix $H_\theta (A)$, for each $\theta\in \mathbb R$,
in terms of the eigenvalues  of the positive semidefinite matrix: 
\begin{equation}\label{Mtheta}
   M(\theta)\,=\, C^*C+D\,D^*-2\Re(\e^{-2i\theta}DC),\qquad \hbox{if} \quad n\leq 2r.
\end{equation}

If $n>2r$, we just need to  interchange $\alpha, r, C$  and $\beta, n-r, D$, respectively, having in mind the considerations above.
For simplicity of notation,   let
\begin{equation}\label{B_e_omega}
 B=A-\frac{1}{2}(\alpha+\beta)I_n \qquad \hbox{and}  \qquad \omega =\frac{1}{2}(\alpha-\beta).
\end{equation}


\begin{pro}\label{lema1} Let $A\in M_n$ be a block matrix of type {\rm (\ref{bloco})}, $n \leq 2r$ 
and $\theta\in \mathbb R$. Let  $\mu_1(\theta),\dots,\mu_{n-r}(\theta)$ be the eigenvalues of the matrix $M(\theta)$ in {\rm (\ref{Mtheta})}.
\begin{itemize}
\item[\bf (a)] If $n<2r$,  the eigenvalues of $H_\theta(A)$  are $\Re(\e^{-i\theta}\alpha)$ in $\sigma_+^J(H_{\theta}(A))$ and
\begin{equation}\label{lambdaj}
\lambda_{j,\pm}(H_\theta(A))\,=\,\frac{1}{2}\Re(\e^{-i\theta}(\alpha+\beta))
                      \pm \sqrt{\big(\Re(\e^{-i\theta}\omega)\big)^2-\frac{1}{4}\mu_j(\theta)}
\end{equation} 
for $j=1, \dots, n-r$. 
If $n=2r$, the eigenvalues of $H_\theta(A)$  are those given in {\rm (\ref{lambdaj})} for $j=1, \dots, \frac{n}{2}$.

\item[\bf (b)] If  $\mu_j(\theta)< 4\big(\Re(\e^{-i\theta}\omega)\big)^2$, then
 one of the eigenvalues in {\rm (\ref{lambdaj})} belongs to $\sigma_+^J(H_{\theta}(A))$ and the other to $\sigma_-^J(H_{\theta}(A))$.
 
\end{itemize}
\end{pro}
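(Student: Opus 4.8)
The plan is to put $H_\theta(A)$ in block form, recognize a Schur-complement structure, and thereby reduce the eigenvalue problem to that of the positive semidefinite matrix $M(\theta)$ of (\ref{Mtheta}). First I would compute the $J$-adjoint of $A$ in (\ref{bloco}),
\[
 A^\#=JA^*J=\begin{bmatrix}\bar\alpha\,I_r & -D^*\\ -C^* & \bar\beta\,I_{n-r}\end{bmatrix},
\]
so that, applying $\Re(\e^{-i\theta}z)=\cos\theta\,\Re(z)+\sin\theta\,\Im(z)$ to the diagonal blocks and simplifying the off-diagonal ones,
\[
 H_\theta(A)=\begin{bmatrix}\Re(\e^{-i\theta}\alpha)\,I_r & G\\ -G^* & \Re(\e^{-i\theta}\beta)\,I_{n-r}\end{bmatrix},\qquad G=\tfrac12\big(\e^{-i\theta}C-\e^{i\theta}D^*\big).
\]
A one-line expansion, using $(\e^{-2i\theta}DC)^*=\e^{2i\theta}C^*D^*$, then yields the identity $G^*G=\tfrac14 M(\theta)$, so the eigenvalues of $G^*G$ are exactly $\tfrac14\mu_1(\theta),\dots,\tfrac14\mu_{n-r}(\theta)$.

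Next I would solve $H_\theta(A)\,x=\lambda x$ for $x=\binom{u}{v}$ with $u\in\C^r$, $v\in\C^{n-r}$. The case $v=0$ forces $G^*u=0$ and $\lambda=\Re(\e^{-i\theta}\alpha)$; since $\ker G^*$ has dimension at least $2r-n$, when $n<2r$ this value is an eigenvalue, and its eigenvectors $\binom{u}{0}$ satisfy $[x,x]_J=\|u\|^2>0$, hence lie in $\sigma_+^J(H_\theta(A))$. When $v\neq0$, eliminating $u=(\lambda-\Re(\e^{-i\theta}\alpha))^{-1}Gv$ and substituting in the second block row gives $G^*Gv=-(\lambda-\Re(\e^{-i\theta}\alpha))(\lambda-\Re(\e^{-i\theta}\beta))\,v$, so $v$ is an eigenvector of $G^*G$; writing $\Re(\e^{-i\theta}\alpha)=\tfrac12\Re(\e^{-i\theta}(\alpha+\beta))+\Re(\e^{-i\theta}\omega)$ and $\Re(\e^{-i\theta}\beta)=\tfrac12\Re(\e^{-i\theta}(\alpha+\beta))-\Re(\e^{-i\theta}\omega)$, this relation becomes $\big(\lambda-\tfrac12\Re(\e^{-i\theta}(\alpha+\beta))\big)^2=\big(\Re(\e^{-i\theta}\omega)\big)^2-\tfrac14\mu_j(\theta)$, i.e.\ precisely (\ref{lambdaj}). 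Counting the $2(n-r)$ roots obtained this way, together with the multiplicity $2r-n$ of $\Re(\e^{-i\theta}\alpha)$, accounts for all $n$ eigenvalues and proves (a); when $n=2r$ the first branch is absent unless some $\mu_j(\theta)=0$, in which case $\Re(\e^{-i\theta}\alpha)$ coincides with one of the $\lambda_{j,\pm}$.

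For (b), under $\mu_j(\theta)<4\big(\Re(\e^{-i\theta}\omega)\big)^2$ the radicand in (\ref{lambdaj}) is positive, so $\lambda_{j,\pm}$ are real and distinct. Setting $a=\Re(\e^{-i\theta}\alpha)$, $b=\Re(\e^{-i\theta}\beta)$, $m=\tfrac12\Re(\e^{-i\theta}(\alpha+\beta))$ and taking a corresponding eigenvector $x=\binom{u}{v}$ with $u=(\lambda-a)^{-1}Gv$, the identity $v^*G^*Gv=\tfrac14\mu_j(\theta)\|v\|^2$ gives
\[
 [x,x]_J=\|u\|^2-\|v\|^2=\frac{\tfrac14\mu_j(\theta)-(\lambda-a)^2}{(\lambda-a)^2}\,\|v\|^2,
\]
and since $\tfrac14\mu_j(\theta)=-(\lambda-a)(\lambda-b)$ the numerator equals $-2(\lambda-a)(\lambda-m)$. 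Evaluating at $\lambda_{j,\pm}=m\pm s$, where $s=\big((\Re(\e^{-i\theta}\omega))^2-\tfrac14\mu_j(\theta)\big)^{1/2}$ satisfies $0<s<|\Re(\e^{-i\theta}\omega)|$, the two numerators become $2s\big(\Re(\e^{-i\theta}\omega)-s\big)$ and $-2s\big(\Re(\e^{-i\theta}\omega)+s\big)$, which are nonzero and of opposite sign; hence one of $\lambda_{j,\pm}$ lies in $\sigma_+^J(H_\theta(A))$ and the other in $\sigma_-^J(H_\theta(A))$.

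The routine part is the algebra above; the point needing care is the degenerate configurations. When $\mu_j(\theta)=0$ one of $\lambda_{j,\pm}$ equals $\Re(\e^{-i\theta}\alpha)$, so the substitution $u=(\lambda-a)^{-1}Gv$ is invalid, and when two of the $\lambda_{j,\pm}$ --- or a $\lambda_{j,\pm}$ and $\Re(\e^{-i\theta}\alpha)$ --- coincide one must work with the whole eigenspace rather than a single eigenvector. In every such case one can still write down explicit eigenvectors (for instance $\binom{0}{v_0}$ with $v_0\in\ker G$, which realizes $\Re(\e^{-i\theta}\beta)$ with $[x,x]_J<0$, and a vector in $\ker G^*$ realizing $\Re(\e^{-i\theta}\alpha)$ with $[x,x]_J>0$), so that the statements of (a) and (b) still hold; this case analysis is elementary but should be carried out carefully.
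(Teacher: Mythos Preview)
Your proof is correct and follows essentially the same strategy as the paper: both write $H_\theta(A)$ in the block form with scalar diagonals and off-diagonal blocks $G=\tfrac12(\e^{-i\theta}C-\e^{i\theta}D^*)$, $-G^*$, and reduce the eigenvalue problem to that of $G^*G=\tfrac14 M(\theta)$. The only real difference is tactical: the paper applies the singular value decomposition of $G$ to produce a $J$-unitary similarity that splits $H_\theta(B)$ into explicit $2\times 2$ blocks (plus $1\times 1$ blocks), then reads off eigenvalues and eigenvectors blockwise; you instead eliminate $u$ via the Schur complement and land directly on the equation $G^*Gv=-(\lambda-a)(\lambda-b)v$. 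Your route is slightly more economical for part~(a), while the paper's explicit block diagonalization pays off in part~(b), where the eigenvectors $u_{j,\pm}(\theta)$ are written down in coordinates and the sign of $[u_{j,\pm},u_{j,\pm}]_J$ is checked by a short product computation rather than your numerator analysis. Both arguments handle the degenerate case $\mu_j(\theta)=0$ (equivalently $s_j(\theta)=0$) separately, exactly as you do in your last paragraph; note that your inequality $0<s<|\Re(\e^{-i\theta}\omega)|$ tacitly assumes $\mu_j(\theta)>0$, so that separate treatment is indeed required.
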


\begin{proof} \noindent {\bf (a)} 
Since $A$ is the block matrix {\rm (\ref{bloco})}, considering (\ref{B_e_omega}), we have 
$$
B = \left[\begin{matrix}\omega I_{r}  & C\\
D & -\omega I_{n-r}
\end{matrix}\right]
$$
and
\begin{equation}\label{HthetaAB}
 H_\theta(A)=\frac{1}{2}\Re(\e^{-i\theta}(\alpha+\beta))I_n+H_\theta(B).
\end{equation}

Let $N_\theta=\frac{1}{2}\left(\e^{-i\theta}C-\e^{i\theta}\,D^*\right)$ and consider $k={\rm rank}\,N_{\theta}$.
By the singular value decomposition, there exist unitary matrices $U\in M_r$, $V\in M_{n-r}$, such that  
  $$
    U^*\,N_\theta\, V \, = \left[\begin{matrix} D_{\theta} \\ O\end{matrix}\right],
   $$  
 where $D_\theta$  is diagonal with the non-zero singular values   $s_1(\theta), \dots, s_k(\theta)$
of $N_\theta$ on the first main diagonal entries  (and the null block $O$ is absent if $n=2r$). 
 Since
\[  
H_{\theta}(B)\,
=\,\left[\begin{matrix}
\omega_\theta I_{r}  & N_\theta\\
-N_\theta^* & -\omega_\theta I_{n-r}
\end{matrix}\right], \qquad 
\omega_\theta=\Re(\e^{-i\theta}\omega),
\] 
considering  $Z=U\oplus V$, we can see that the matrix 
\begin{equation}\label{Ftheta}
Z^* H_{\theta}(B)\, Z    
  \, =\, \left[\begin{matrix}\,\omega_\theta I_r& U^*N_\theta\,V\\
    -\,V^*N_\theta^*\,U&-\omega_\theta I_{n-r}\,\end{matrix}\right]. 
\end{equation}
is  permutationally similar to a direct sum of $k$  blocks of order $2$
 of type  
\[
   \left[\begin{matrix} \omega_\theta & \ s_i(\theta) \ \\
   -s_i(\theta) & -\omega_\theta \end{matrix}\right], \qquad i=1, \dots, k,
\]
and blocks of order $1$, namely $r-k$ equal to  $\omega_\theta$ and  $n-r-k$  equal to  $-\omega_\theta$.
Hence, the eigenvalues of $H_\theta(B)$ are given by
\begin{equation}\label{vpmu}
  \lambda_{j,\pm}(\theta)=
  \pm\sqrt{\omega^2_\theta - s_j^2(\theta)},\qquad j=1,\ldots,k, 
\end{equation}
together with $\omega_\theta$ and $-\omega_\theta$ of multiplicities $r-k$ and $n-r-k$, respectively.
We remark that  $M(\theta)=4\,N^*_\theta N_\theta$. 
Thus, the eigenvalues of $M(\theta)$ are  $\mu_j(\theta)=4s^2_j(\theta)$, $j=1, \dots, n-r$. Recalling (\ref{HthetaAB}),  the eigenvalues of $H_\theta(A)$ are readily obtained. 
\smallskip

\noindent {\bf (b)} 
Under the hypothesis, we have $s_j^2(\theta)<\omega^2_\theta$, so  (\ref{vpmu}) are real and distinct. 
Suppose that $s_j(\theta)\neq 0$. 
The vector $u_{j,\pm}(\theta)$ whose $j$th entry is $\omega_\theta+\lambda_{j,\pm}(\theta),$ 
the $(r+j)$th entry is $-s_j(\theta)$ and all the others are zero is an  eigenvector of (\ref{Ftheta}) associated to the corresponding eigenvalue (\ref{vpmu}).
By some computations, we get
\begin{eqnarray*}
 [u_{j,\pm}(\theta),u_{j,\pm}(\theta)]_J 
\!&=&\! \big(\omega_\theta\pm\lambda_{j,+}(\theta) \big)^2\!-\,s_j^2(\theta)\\
\!&=&\! 2 \lambda_{j,+}(\theta)\,\big( \lambda_{j,+}(\theta)\pm \omega_\theta\big). 
\end{eqnarray*}
From $\lambda^2_{j,+}(\theta)	-\omega_\theta^2=-s_j^2(\theta)$, we find that
\[
 [u_{j,+}(\theta),u_{j,+}(\theta)]_J\,[u_{j,-}(\theta),u_{j,-}(\theta)]_J\,
 =\,- 4 \,\lambda^2_{j,+}(\theta)\, s_j^2(\theta) \, <\, 0.
\] 
 Since  $Z$ is a $J$-unitary matrix, we have
 $$ [Zu_{j,\pm}(\theta),Zu_{j,\pm}(\theta)]_J =\, [u_{j,\pm}(\theta),u_{j,\pm}(\theta)]_J\, <\, 0.$$
Consequently, $Zu_{j,\pm}(\theta)$ is an eigenvector of $H_{\theta}(A)$ associated to  the eigenvalue $\lambda_{j,\pm}(H_\theta(A))$, such that one belongs to $\sigma_+^J(H_{\theta}(A))$, the other to $\sigma_-^J(H_{\theta}(A))$.
When $s_j(\theta)=0$, then (\ref{lambdaj}) reduce trivially to  $\Re(\e^{-i\theta}\alpha)$ in $\sigma_+^J(H_{\theta}(A))$ and to $\Re(\e^{-i\theta}\beta)$ in $\sigma_-^J(H_{\theta}(A))$.
\end{proof}

\medskip\smallskip
 
 We finish this section with some useful considerations. 
 If $\alpha=\beta$ in  Proposition \ref{lema1}, then the eigenvalues 
(\ref{lambdaj}) of $H_\theta(A)$ associated to a non-zero eigenvalue $\mu_j(\theta)$ of $M(\theta)$ are non-real and  $H_\theta(A)$ is not in class $\cal J$.  This is the reason for  assuming $\alpha \neq 
\beta$ in \eqref{bloco}, to avoid trivial cases of degeneracy of $W^J(A)$. 


The following observation is in order too. 
If $\Re(\e^{-i\theta}\alpha)$,  or $\Re(\e^{-i\theta}\beta)$,  is an eigenvalue   of $H_\theta(A)$, then  
the point $\alpha$, or $\beta$, respectively, is a component of $C^J(A).$ 
The remaining curves in $C^J(A)$ form a family which is central symmetric with respect to $\frac{1}{2}(\alpha+\beta)$ as implied by (\ref{lambdaj}). We are interested in imposing conditions on the blocks $C$ and $D$ that ensure that these curves,  whenever distinct from singletons,   are non-degenerate hyperbolas. Then the  boundary of $W^J(A)$ will be formed by  hyperbolic arcs and possibly flat portions, produced from the pseudo-convex hull of the different components of $C^J(A)$.
For that purpose, we will analyze  cases of matrices $M(\theta),$
 defined in (\ref{Mtheta}), whose eigenvalues can be explicitly computed. 


\section{Main Result}

Now, we assume that the non-diagonal blocks of the  matrix $A$ in (\ref{bloco}) are such that $DC$ is normal and commutes with $C^*C+DD^*$, so that they can be diagonalized by the same unitary similarity transformation,  inspired by the work \cite{Geryba}.
In this event, let us label the eigenvalues of $DC$ and $C^*C+DD^*$, respectively, by $z_j$ and $h_j$ according to the order in which they appear in the respective main diagonals. That is, letting  $U$ be the unitary matrix that simultaneously diagonalizes the above matrices, the $j$-th diagonal entries of 
$ U^*(C^*C+DD^*)\,U$ and 
$U^*DC U$
are  $h_j$ and $z_j$, respectively. 
In the sequel, for simplicity of notation, we use 
$$\Delta_j=(\alpha-\beta)^2+4z_j,$$
 and ${\cal \hat H}_j$ will denote the hyperbola with foci at 
$$
f_{j,\pm} =\,\frac{1}{2}(\alpha+\beta) \pm \frac{1}{2}\sqrt{\Delta_j}
$$
and transverse axis, non-transverse axis of length, respectively, equal to
\begin{equation}\label{eixos}
\! N_j=\! 
\left(\frac{1}{2}|\Delta_j|+\frac{1}{2}\left|\alpha-\beta\right|^2-h_j\right)^{\!\frac{1}{2}}\!, \quad  
M_j=\!
\left(\frac{1}{2}|\Delta_j|-\frac{1}{2}\left|\alpha-\beta\right|^2+h_j\right)^{\!\frac{1}{2}}\!. \
\end{equation}
As usual, ${\rm Arg}(z)$ denotes the principal argument of the complex number $z$. 


\medskip

\begin{theorem}\label{principal}  Let $A$ be of the block form {\rm (\ref{bloco})}, 
$n\leq 2r$, such  that $DC$ is normal and commutes with $C^*C+DD^*$, with  eigenvalues $z_j$ and $h_j$, $j=1, \dots, n-r$, respectively, labelled as mentioned above.  The following conditions are equivalent:
 \begin{itemize}
 \item[\bf (a)] $C^J(A)=\{{\cal \hat H}_1, \dots, {\cal \hat H}_{n-r}, \alpha\}$, whenever $n<2r$, or 
 $C^J(A)=\{{\cal \hat H}_1, \dots, {\cal \hat H}_{\frac{n}{2}}\}$, whenever $n=2r$;
\item[\bf (b)] For $j=1, \dots, n-r$, we have  \begin{equation}\label{cond hip 1}
|\alpha-\beta|^2-|\Delta_j| \, \leq  \,   2h_j \, <  \,
|\alpha-\beta|^2+ |\Delta_j|.
\end{equation} 
 \end{itemize}

\noindent In this case, $W^J(A)$ is the pseudo-convex hull of ${\cal \hat H}_1 \cup \cdots \cup {\cal \hat H}_{n-r}$. 
Moreover, each hyperbola ${\cal \hat H}_j$ is nondegenerate if and only if the corresponding LHS inequality in {\rm (\ref{cond hip 1})} is strict.
 
\end{theorem}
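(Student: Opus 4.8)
The plan is to reduce everything to the two-by-two criterion already encoded in Proposition~\ref{lema1} and Theorem~\ref{T2.11}, exploiting the simultaneous diagonalization hypothesis. First I would use the unitary $U$ that diagonalizes both $DC$ and $C^*C+DD^*$: conjugating $A$ by $I_r\oplus U$ (a $J$-unitary transformation since $J=I_r\oplus -I_{n-r}$ commutes with block-diagonal unitaries), property P2 gives $W^J(A)=W^J(\widetilde A)$ where $\widetilde A$ has $U^*DCU=\diag(z_1,\dots,z_{n-r})$ and $U^*(C^*C+DD^*)U=\diag(h_1,\dots,h_{n-r})$. The key computation is that $M(\theta)=C^*C+DD^*-2\Re(\e^{-2i\theta}DC)$, after the same conjugation, has eigenvalues $\mu_j(\theta)=h_j-2\Re(\e^{-2i\theta}z_j)$ — this needs the commutation of $DC$ with $C^*C+DD^*$ and normality of $DC$, so that $M(\theta)$ is diagonalized by the same $U$ for every $\theta$. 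Plugging $\mu_j(\theta)$ into \eqref{lambdaj} of Proposition~\ref{lema1}, and recalling $\omega=\tfrac12(\alpha-\beta)$, the eigenvalue pair producing the $j$-th curve is $\tfrac12\Re(\e^{-i\theta}(\alpha+\beta))\pm\sqrt{(\Re(\e^{-i\theta}\omega))^2-\tfrac14\mu_j(\theta)}$.

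Next I would recognize this pair as the $J$-Hermitian-part eigenvalues of a $2\times2$ matrix and invoke the Hyperbolical Range Theorem in the form stated in Section~2. Concretely, the scalar quantity $\Re(\e^{-i\theta}\omega)^2-\tfrac14\mu_j(\theta)$ should be rewritten, after collecting the $\theta$-dependence, as a quantity of the form $\widetilde a_j^2-\widetilde c_j^2\sin^2(\theta-\varphi_j)$ for suitable $\widetilde a_j,\widetilde c_j,\varphi_j$, with $\widetilde c_j^2=\widetilde a_j^2+\widetilde b_j^2$; matching coefficients identifies $\widetilde a_j,\widetilde b_j$ with (shifted, rotated) versions of $N_j,M_j$ from \eqref{eixos}, the focal shift $\tfrac12(\alpha+\beta)$ and rotation coming from $\Re(\e^{-i\theta}(\alpha+\beta))$ and $\mathrm{Arg}\,\Delta_j$. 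The equivalence (a)$\Leftrightarrow$(b) then follows because, by Proposition~\ref{lema1}(b) together with Theorem~\ref{T2.11} applied componentwise, the $j$-th pair of eigenvalue branches generates the genuine hyperbola $\widehat{\mathcal H}_j$ precisely when the branches are real on the full relevant $\theta$-interval and the non-interlacing hypothesis holds, which unwinds exactly to $|\alpha-\beta|^2+|\Delta_j|>2h_j$ (reality/boundedness, giving the strict RHS inequality) and $2h_j\ge|\alpha-\beta|^2-|\Delta_j|$ (ensuring $M_j^2\ge 0$, i.e.\ the curve is a hyperbola or a flat segment rather than empty/line); the point $\alpha$ in $C^J(A)$ for $n<2r$ is the $\Re(\e^{-i\theta}\alpha)$ eigenvalue flagged in the remark after the proof of Proposition~\ref{lema1}. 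The final ``pseudo-convex hull'' assertion is then immediate from the description of $W^J(A)$ as the pseudo-convex hull of $C^J(A)$ recalled in Section~1, since $\alpha$ lies on the line of foci of each $\widehat{\mathcal H}_j$ between its two branches and so contributes nothing new.

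For the ``moreover'' clause, I would isolate the non-transverse semi-axis $M_j=\bigl(\tfrac12|\Delta_j|-\tfrac12|\alpha-\beta|^2+h_j\bigr)^{1/2}$: a non-degenerate hyperbola requires $M_j>0$ (equivalently $\widetilde b_j>0$ in the language of Theorem~\ref{T2.11}), which is exactly $2h_j>|\alpha-\beta|^2-|\Delta_j|$, the strict form of the LHS of \eqref{cond hip 1}. When equality holds there, $M_j=0$ and $\widehat{\mathcal H}_j$ degenerates — the branches collapse onto the line through the foci, so that component becomes a pair of half-lines (or a segment) rather than a true hyperbola; one should note this still satisfies (a) under the convention that $\widehat{\mathcal H}_j$ denotes the (possibly degenerate) conic with the stated data, which is why (a) is stated with $\le$ and the sharpening is relegated to the final sentence.

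\textbf{Main obstacle.} The technical heart is the bookkeeping in the second paragraph: showing that $\Re(\e^{-i\theta}\omega)^2-\tfrac14\mu_j(\theta)$ really collapses to the canonical form $\widetilde a_j^2-\widetilde c_j^2\sin^2(\theta-\varphi_j)$ with the phases lining up so that the focus, center and axis lengths come out as in \eqref{eixos} — in particular tracking how the argument of $\Delta_j=(\alpha-\beta)^2+4z_j$ enters, and verifying that the interval of valid $\theta$ predicted by Theorem~\ref{T2.11}, namely $|\theta-\varphi_j|<\arctan(\widetilde a_j/\widetilde b_j)$, is consistent across all $j$ and matches the maximal interval $\Omega$ on which $H_\theta(A)\in\mathcal J$. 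Everything else is an application of results already in hand.
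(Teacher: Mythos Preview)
Your proposal is correct and follows essentially the same route as the paper: both diagonalize $M(\theta)$ via the common unitary to get $\mu_j(\theta)=h_j-2\Re(\e^{-2i\theta}z_j)$, rewrite the radicand $(\Re(\e^{-i\theta}\omega))^2-\tfrac14\mu_j(\theta)$ as $\tfrac18\bigl(4|\omega|^2-2h_j+|\Delta_j|-2|\Delta_j|\sin^2(\theta-\phi_j)\bigr)$ with $\phi_j=\tfrac12\mathrm{Arg}(\Delta_j)$, and then read off the hyperbola parameters componentwise via the criterion of Theorem~\ref{T2.11} (the paper phrases this last step as an envelope computation, citing the proof of that theorem). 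Two small corrections to side remarks: the paper works with a separate interval $\Omega_j=(\phi_j-\eta_j,\phi_j+\eta_j)$ for each component rather than a single global $\Omega$, so the consistency issue you flag as the main obstacle does not arise; and $\alpha$ is in general \emph{not} on the line of foci of $\widehat{\mathcal H}_j$ (those foci are $\tfrac12(\alpha+\beta)\pm\tfrac12\sqrt{\Delta_j}$), though your conclusion that it adds nothing to the pseudo-convex hull is still correct because $\alpha\in\sigma_+^J(A)\subset W_+^J(A)$.
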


\begin{proof} 
Under the hypothesis, the matrix 
 $
M(\theta)
$
in \eqref{Mtheta} is unitary diagonalizable for all values of $\theta$ by the same unitary similarity as $DC$ and $C^*C+DD^*$, implying
that the eigenvalues of $M(\theta)$ are
\begin{equation}\label{eq1}
\mu_j(\theta)\, 
= \, h_j-2\,\Re(z_j)\cos(2\theta)-2\,\Im(z_j)\sin(2\theta), \qquad j=1, \dots, n-r.
\end{equation} 
By some computations, 
\begin{eqnarray*}
(\Re(\e^{-i\theta}\omega))^2 
&=&\big(\Re(\omega)\cos\theta+\Im(\omega)\sin\theta\big)^2\\
&=& (\Re(\omega))^2\cos^2\theta+(\Im(\omega))^2\sin^2\theta+\Re(\omega)\Im(\omega)\sin(2\theta)
\end{eqnarray*}
and
$$
(\Re(\omega))^2=\frac{\Re(\omega^2)}{2}+\frac{|\omega|^2}{2},
\quad (\Im(\omega))^2=-\frac{\Re(\omega^2)}{2}+\frac{|\omega|^2}{2}, \quad \Re(\omega)\Im(\omega)=\frac{1}{2}\Im(\omega^2).$$
Hence,
\begin{equation}\label{eq3}
 (\Re(\e^{-i\theta}\omega))^2\,=\,\frac{|\omega|^2}{2}+\frac{\Re(\omega^2)}{2}\cos(2\theta)+\frac{\Im(\omega^2)}{2}\sin(2\theta).
\end{equation}
Let $ \phi_j=\frac{1}{2}{\rm Arg}(\Delta_j)$.  We may conclude, by (\ref{eq1}) 
and (\ref{eq3}), that
\begin{eqnarray}
(\Re(\e^{-i\theta}\omega))^2-\frac{\mu_j(\theta)}{4}\!\! 
&=&\frac{|\omega|^2}{2}-\frac{h_j}{4}+\frac{\Re(\omega^2+z_j)}{2}\cos(2\theta)+\frac{\Im(\omega^2+z_j)}{2}\sin(2\theta)\nonumber \\
&=&\frac{1}{8}\left(4|\omega|^2-2h_j+|\Delta_j|\cos(2\theta-2\phi_j)\right) \nonumber \\ 
&=& \frac{1}{8}\left(4|\omega|^2-2h_j+|\Delta_j|-2|\Delta_j|\sin^2(\theta-\phi_j)\right).  \label{R>0}
\end{eqnarray}

Firstly, assuming {\bf (b)},
suppose that  \eqref{cond hip 1} holds with strict LHS inequality for some $j\in\{1, \dots, n-r\}$.
Then  $M_j$, $N_j$ given in \eqref{eixos} are positive and, in this case, we find that \eqref{R>0} is positive  for 
$\theta \in \Omega_j=\left(\phi_j -\eta_j, \phi_j +\eta_j\right)$,  considering $\eta_j=\arctan \left(N_j/M_j\right)$. 
 Consequently,  
$H_\theta(B)$ has 
real eigenvalues of the form
 $$
 \lambda_{j, \pm}\big(H_\theta(B)\big) \,=\, \pm\left(\frac{1}{4}N^2_j-\frac{1}{4}(M^2_j+N^2_j) \sin^2 (\theta -\phi_j)\right)^\frac{1}{2}, \qquad \theta \in \Omega_j,
 $$
according to Proposition \ref{lema1} and  the previous computations, 
 such that one of these eigenvalues belongs to $\sigma_+^J(H_{\theta}(B))$ and the other to $\sigma_-^J(H_{\theta}(B))$.

The equation of the 
tangent lines of a  component of $C^J({{\rm e}^{-i\phi_j}}B)$, perpendicular to the direction $\theta$, are of the form 
$$
x\cos\theta+y\sin\theta \,=\, \lambda_{j, \pm}\big(H_{\theta+\phi_j}(B)\big)\,,
$$
because $H_\theta ({{\rm e}^{-i\phi_j}}B)=H_{\theta+\phi_j} (B)$.
The envelope of this family of tangent lines, with
$\theta$ ranging over $\Omega_j$, gives this component in
$C^J({\rm e}^{-i\phi_j} B)$, which can be computed from its parametric equations 
$$
\left\{\begin{array}{ccc}
x &\!\! =\! \!&  \lambda_{j, \pm}\big(H_{\theta+\phi_j}(B)\big)\,\cos \theta \,-\, \lambda'_{j, \pm}\big(H_{\theta+\phi_j}(B)\big)\,\sin \theta\\
y &\!\! = \!\!&  \lambda_{j, \pm}\big(H_{\theta+\phi_j}(B)\big)\,\sin \theta \, +\, \lambda'_{j, \pm}\big(H_{\theta+\phi_j}(B)\big)\,\cos \theta
\end{array}\right.\!.
$$
We easily conclude  that this envelope   is a non-degenerate hyperbola centered at the origin (see the proof of Theorem 3.1 given in \cite[Theorem 2.1]{BLS_Hyp} for an analo\-gous reasoning); each branch of the hyperbola has the parametric equations associated to either $\lambda_{j,+}\big(H_{\theta+\phi_j}(B)\big)$ or $\lambda_{j, -}\big(H_{\theta+\phi_j}(B)\big)$. The transverse and  non-transverse axes have length $N_j$ and $M_j$.
 The corresponding rotated hyperbola in $C^J(B)= {\rm e}^{i \phi_j}  C^J({\rm e}^{-i \phi_j} B)$  has transverse axis parallel to $\e^{\phi_j}$ and the foci are $\pm\frac{1}{2}\sqrt{\Delta_j}$,
this implying, after applying a translation, that ${\cal \hat H}_j$ is a non-degenerate hyperbolic component  in  $C^J(A)$, 
because $A=B+\frac{1}{2}(\alpha+\beta)I_n$.

Moreover, if \eqref{cond hip 1} holds for $j=1, \dots, n-r$, then some components  of  $C^J(A)$  may (eventually) reduce to  points, this occuring if either
(i) the LHS inequality in \eqref{cond hip 1} holds as equality  for some $\iota\in\{1, \dots, n-r\}$ or if 
(ii)  $M(\theta)$ is singular. 
If (i)  holds, we have $M_{\iota }=0$ and $N_{\iota }>0$, then ${\cal\hat  H}_\iota$ reduces to the points $f_{\iota ,\pm}$,
one in $W_+^J(A)$, the other in $-W_-^J(A)$.
In case~(ii), we find $\alpha$  in $W_+^J(A)$ and $\beta$  in $-W_-^J(A)$ as points of $C^J(A)$. 
Otherwise, if $M(\theta)$ is non-singular and \eqref{cond hip 1} holds with strict LHS inequality for all $j =1, \dots, n-r$, then $C^J(A)$ contains at most $n-r$  nondegenerate concentric hyperbolas, ${\cal \hat H}_1, \dots,$ ${\cal \hat H}_{n-r}$, some of these possibly coincident (whenever $DC$ and $C^*C+DD^*$ have multiple eigenvalues).

Thus, we proved that if  {\bf (b)} is assumed, then  $C^J(A)=\{{\cal \hat H}_1, \dots, {\cal \hat H}_{n-r}\}$ and 
the point $\alpha$ is additionally in $C^J(A)$, if $n<2r$, that is, we find  {\bf (a)}. 
The reverse implication, {\bf (a)} implies {\bf (b)}, is clearly satisfied too.

In any of these cases,
 $W^J(A)$ is the pseudo-convex hull of ${\cal \hat H}_1 \cup \cdots \cup {\cal \hat H}_{n-r}$, as the point $\alpha$  is already included in this pseudo-convex hull.
\end{proof}

\medskip 
\smallskip

The main result in Theorem \ref{principal} can be easily adapted to the  case $n>2r$, just interchanging the roles of $C,D$, together with those  of $\alpha, \beta$ and $r, n-r$.  

The conditions imposed in 
 Theorem \ref{principal}, concerning  the non-scalar blocks $C$, $D$ of the  block matrix  $A$ of the form (\ref{bloco}),
are essential to obtain hyperbolic components in $C^J(A)$, as the next example shows. 

\smallskip

\begin{Exa} \label{exdelto} Let $J=I_2\oplus -I_2$ and $A$ be the block matrix of form {\rm (\ref{bloco})}, with 
$$ 
\alpha=1,\qquad \beta=-7, \qquad 
 C=\left[
\begin{array}{cc}
 2 & 0 \\
 3 & 3 \\
\end{array}
\right], \qquad  D=\left[
\begin{array}{cc}
 3 & 2 \\
 3 & 0 \\
\end{array}
\right].
$$
Then $$
C^*C+DD^*=\left[
\begin{array}{cc}
 26 & 18 \\
 18 & 18 \\
\end{array}
\right] \qquad \hbox{and}\qquad
DC=\left[
\begin{array}{cc}
 12 & 6 \\
 6 & 0 \\
\end{array}
\right].
$$
The matrix $DC$ is normal, but it does not commute with $C^*C+DD^*$, so the hypothesis of Theorem \ref{principal} are not fullfilled.
In this case,  $C^J(A)$ consists of a hyperbolic-like shaped curve (which is not a hyperbola) and two deltoids in its interior (see Figure \ref{Fig1}).

\begin{figure}[!h]
\centering
\includegraphics[scale=0.75]{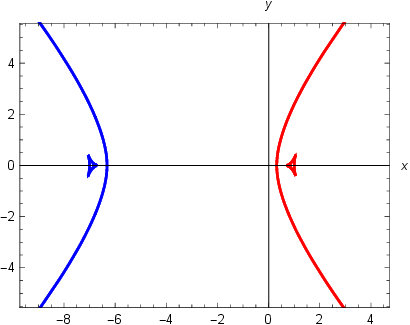}
\caption{Boundary generating curve of $W^J(A)$ in Example \ref{exdelto}}\label{Fig1}
\end{figure}
\end{Exa}

\section{Arcs of hyperbolas and flat portions on the boundary of $W^J(A)$}

\smallskip

Under the hypothesis of Theorem \ref{principal}, if $C^J(A)$ has different hiperbolic components, with points on the boundary of $W^J(A)$,
then boundary flat portions will appear too. 
These boundary line segments  are naturally central-symmetric with respect to the point $\frac{1}{2}(\alpha+\beta)$. 
In other cases,  the shape of $W^J(A)$ may be hyperbolic, 
as discussed in  detail in Section 5.

Example \ref{ex9} illustrates the existence of boundary flat portions that intersect at a corner of $W^J(A)$, 
that is, a  boundary point of  $W^J(A)$  on more than a support line.
As it is well known, any corner of $W^J(A)$ is an eigenvalue of $A$ \cite{LR PAMS}. 

In all the figures used throughout to illustrate, the branches of the hyperbolas, as well as the points, in $W_+^J(A)$ are represented in blue and those in $-W_-^J(A)$ are colored in red.

\begin{Exa} \label{ex9} Let $J=I_3\oplus -I_3$ and $A$ be a block matrix of the form {\rm (\ref{bloco})}, with 
$$ \alpha=10,\qquad \beta=6, \qquad
 C=\left[
\begin{array}{ccc}
 1 & 0 & 1 \\
 0 & 1 & 1 \\
 1 & 1 & 0 \\
\end{array}
\right],\qquad
 D=\left[
\begin{array}{ccc}
 0 & 1 & 1 \\
 1 & 0 & 1 \\
 1 & 1 & 0 \\
\end{array}
\right].
$$
Then $C,D$ 
and $$
DC=\left[
\begin{array}{ccc}
 1 & 2 & 1 \\
 2 & 1 & 1 \\
 1 & 1 & 2 \\
\end{array}
\right]=CD
$$
are real symmmetric and $DC$ commutes with $C^2+D^2=2D+4I_2$, that is, the hypothesis of Theorem \ref{principal} are satisfied. Since  $h_1=8$, $h_2=h_3=2$, $z_1=4$, $z_2=1$, $z_3=-1$, then {\rm (\ref{cond hip 1})} holds for $j=1,2,3$,  with equality at the  LHS of {\rm (\ref{cond hip 1})} for $j=2$. Therefore, 
$C^J(A)$ consists of  the points $8\pm\sqrt{3}$ and two nested hyperbolas, 
 with foci $8\pm \sqrt{8}$ and $8\pm\sqrt{5}$, respectively,   depicted in Figure~2.
The set  $W^J(A)$ is the pseudo-convex hull of the  outer equilateral hyperbola, with foci  at $8 \pm \sqrt{8}$ and axes of length $4$, and the points 
$ 
8+\sqrt{3}\,\in W_+^J(A)$, $8-\sqrt{3}\in -W_-^J(A),
$ 
which are corners of $W^J(A)$
(see Figure \ref{Fig3}).  

\begin{figure}[!h]
	\centering 
	\begin{minipage}[t]{5.8cm} 
	\centering 
	\includegraphics[scale=0.58]{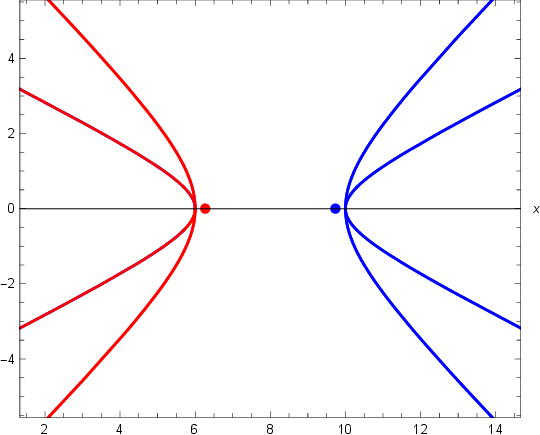}  
\caption{$C^J(A)$ in Example \ref{ex9}} \label{Fig2}
	\end{minipage} 
	\begin{minipage}[t]{6.2cm} 
		\centering 
	\includegraphics[width=\columnwidth]{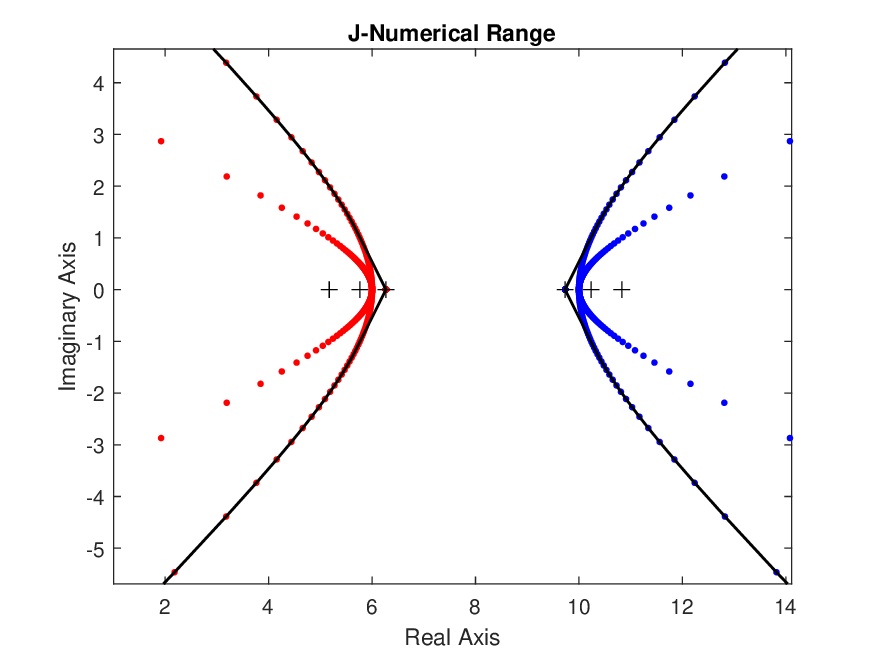} 
			\caption{$W^J(A)$ in Example \ref{ex9}} \label{Fig3}
	\end{minipage} 
\end{figure}	
\end{Exa}

\smallskip

Next, Example \ref{ex1} illustrates Theorem \ref{principal} in the case when non-nested hyperbolas are present in $C^J(A)$.

\medskip

\begin{Exa} \label{ex1} 
Let $J=I_3\oplus -I_3$ and $A$ be of the block form {\rm (\ref{bloco})} with 
$$\alpha=3,\quad \beta=-3, \quad 
C=\left[
\begin{array}{ccc}
 2+2 i & 1-i & 0 \\
 -i & -1+i & 0 \\
 0 & 0 & 4 \\
\end{array}
\right], \quad D=\left[
\begin{array}{ccc}
 i & 0 & 0 \\
 i & 3+i & 0 \\
 0 & 0 & \frac{1}{4} \\
\end{array}
\right].
$$
Then
$$  
C^*C+DD^*=\left[
\begin{array}{ccc}
 10 & -5 i & 0 \\
 5 i & 15 & 0 \\
 0 & 0 & \frac{257}{16} \\
\end{array}
\right],\qquad
DC=\left[
\begin{array}{ccc}
 -2+2 i & 1+i & 0 \\
 -1-i & -3+3 i & 0 \\
 0 & 0 & 1 \\
\end{array}
\right],
$$ 
such that $DC$ is normal 
and comutes with 
$C^*C+DD^*$. 
After some compuations, we can confirm that {\rm (\ref{cond hip 1})} holds, with strict LHS inequality, for $j=1,2,3$. By Theorem \ref{principal},
$C^J(A)$  consists of  three hyperbolas, all centered at the origin, as displayed in Figure 4. 
The boundary of $W^J(A)$ contains hyperbolic arcs, from the two non-nested hyperbolas 
and consequently 
each convex set $W_+^J(A)$ and $-W_-^J(A)$ will have a line segment on its boundary.

 \begin{figure}[!h]\label{F1}
\centering
\includegraphics[scale=0.7]{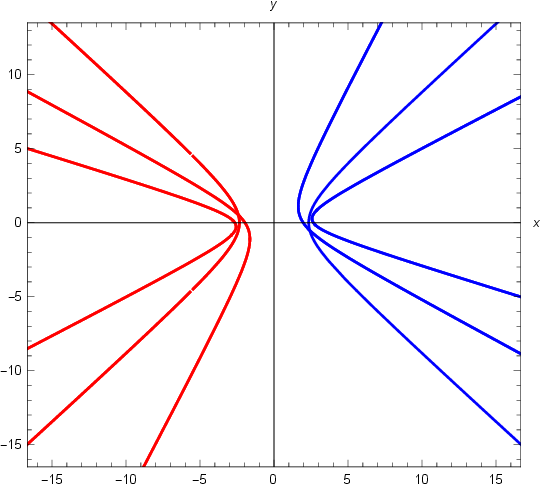}
\caption{Boundary generating curve of $W^J(A)$ in Example \ref{ex1}}
\end{figure}

\end{Exa}

Theorem \ref{principal} holds, in particular, for matrices of the form  {\rm (\ref{bloco})} when $CD$ and $DC$ are both normal. The following result was obtained in \cite[Theorem~3.1]{BLPS}  and is now proved as an easy consequence of
Theorem \ref{principal}. In this case, non-nested hyperbolas may be present in $C^J(A)$ too. 


\smallskip

\begin{corol} \label{Rmk DC CD COMUT} 
Let $A$ be of type {\rm (\ref{bloco})}, such that $CD$ and $DC$ are both normal,  $p=\min\{r,n-r\}$. Let  $\sigma_1, \dots, \sigma_{n-r}$ and $\delta_1, \dots, \delta_{r}$  be the  singular values of $C$ and $D$, respectively. If
\begin{equation}\label{desigual DC CD comu}
2\Re (\overline{f_{j+}}f_{j-}) \, < \, |\alpha|^2 + |\beta|^2-\sigma_j^2 - \delta_j^2 \, < \, |f_{j+}|^2 + |f_{j-}|^2, \quad j=1,\dots, p, \
\end{equation}
then  $C^J(A)$ has at most $p$ hyperbolic components  (some possibly coincident), with foci at $f_{j\pm}$ and
 non-transverse axis of length 
$$
\sqrt{|f_{j+}|^2 +|f_{j-}|^2-|\alpha|^2-|\beta|^2+\sigma^2_j+\delta_j^2}, \qquad j=1,\ldots,p,
$$
and possibly a point, $\alpha$ if $n<2r$ and $\beta$ if $n>2r$.
The set $W^J(A)$ is the pseudo-convex hull of these $p$ hyperbolas.
\end{corol}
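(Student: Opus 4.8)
The plan is to derive Corollary \ref{Rmk DC CD COMUT} as a specialization of Theorem \ref{principal}, the main work being to check that the normality hypotheses on $CD$ and $DC$ imply the simultaneous-diagonalizability hypothesis of the theorem, and then to translate the parameters $z_j,h_j$ into the singular-value data $\sigma_j,\delta_j$ and the foci $f_{j\pm}$. First I would assume, as permitted by the reductions in Section~2, that $n\le 2r$, so that $p=n-r$; the case $n>2r$ follows by interchanging the roles of $C,D$, $\alpha,\beta$, $r,n-r$ exactly as remarked after Theorem \ref{principal}. The key structural fact I would establish is that when $DC$ is normal it is unitarily diagonalizable, and when $CD$ is \emph{also} normal, $C^*C+DD^*$ commutes with $DC$ and is diagonalized by the same unitary $U$. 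The cleanest route is to observe $D(CD)=(DC)D$ and $C^*(DC)^{[*\,\text{type}]}$ relations together with the polar-type identities; concretely, $CC^*$ and $D^*D$ are each functions of $(CD)(CD)^*$-like quantities, and one shows $C^*C+DD^*$ is a polynomial in $DC$ on the relevant invariant subspaces. I would likely argue this by passing to singular value decompositions $C=U_1\Sigma_C V_1^*$, $D=U_2\Sigma_D V_2^*$ and using normality of $CD,DC$ to align the unitary factors; this identifies the eigenvalues $h_j$ of $C^*C+DD^*$ with $\sigma_j^2+\delta_j^2$ (after matching the labelling to that of the eigenvalues $z_j$ of $DC$).

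Next I would compute the foci and axes in the notation of Theorem \ref{principal}. We have $\Delta_j=(\alpha-\beta)^2+4z_j$, and the foci of ${\cal \hat H}_j$ are $f_{j,\pm}=\tfrac12(\alpha+\beta)\pm\tfrac12\sqrt{\Delta_j}$. A direct computation gives $|f_{j,+}|^2+|f_{j,-}|^2=\tfrac12|\alpha+\beta|^2+\tfrac12|\Delta_j|$ and $2\Re(\overline{f_{j,+}}f_{j,-})=\tfrac12|\alpha+\beta|^2-\tfrac12|\Delta_j|$, so that $|f_{j,+}|^2+|f_{j,-}|^2-2\Re(\overline{f_{j,+}}f_{j,-})=|\Delta_j|$ and the sum equals $|\alpha+\beta|^2$. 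Also $|\alpha-\beta|^2=|\alpha|^2+|\beta|^2+|\alpha+\beta|^2-2(|\alpha|^2+|\beta|^2)$ is not what I want; rather I use $|\alpha-\beta|^2+|\alpha+\beta|^2=2(|\alpha|^2+|\beta|^2)$. Substituting $h_j=\sigma_j^2+\delta_j^2$ into \eqref{cond hip 1}, namely $|\alpha-\beta|^2-|\Delta_j|\le 2h_j<|\alpha-\beta|^2+|\Delta_j|$, and rewriting both sides via the above focus identities, the inequality $2h_j<|\alpha-\beta|^2+|\Delta_j|$ becomes, after adding $|\alpha+\beta|^2$ to both sides and dividing, exactly $2\Re(\overline{f_{j,+}}f_{j,-})<|\alpha|^2+|\beta|^2-\sigma_j^2-\delta_j^2$, while $|\alpha-\beta|^2-|\Delta_j|\le 2h_j$ transforms into $|\alpha|^2+|\beta|^2-\sigma_j^2-\delta_j^2\le|f_{j,+}|^2+|f_{j,-}|^2$; the strict form of the latter is what appears in \eqref{desigual DC CD comu}. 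Likewise the non-transverse semi-axis $M_j=\bigl(\tfrac12|\Delta_j|-\tfrac12|\alpha-\beta|^2+h_j\bigr)^{1/2}$ rewrites, via the same identities, as $\bigl(|f_{j,+}|^2+|f_{j,-}|^2-|\alpha|^2-|\beta|^2+\sigma_j^2+\delta_j^2\bigr)^{1/2}$, matching the claimed length.

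With these translations in hand, the corollary follows immediately: condition \eqref{desigual DC CD comu} is precisely condition \textbf{(b)} of Theorem \ref{principal} with strict LHS inequality, so by that theorem $C^J(A)=\{{\cal\hat H}_1,\dots,{\cal\hat H}_{n-r},\alpha\}$ (the point $\alpha$ present when $n<2r$, i.e.\ when $p=n-r<r$; and $\beta$ in the mirrored case $n>2r$), the hyperbolas are nondegenerate, possibly coincident when $DC$ and $C^*C+DD^*$ have repeated eigenvalues, and $W^J(A)$ is the pseudo-convex hull of ${\cal\hat H}_1\cup\cdots\cup{\cal\hat H}_{n-r}$, the point $\alpha$ already lying inside this hull. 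I expect the main obstacle to be the first step: showing rigorously that normality of \emph{both} $CD$ and $DC$ forces $DC$ and $C^*C+DD^*$ to be simultaneously unitarily diagonalizable, with compatible eigenvalue labellings linking $z_j$ to $\sigma_j^2+\delta_j^2$. The SVD-alignment argument needs care because generically the left/right singular vectors of $C$ and $D$ need not be related; one must genuinely use that $CD$ and $DC$ are normal (equivalently, that their Jordan forms are diagonal and their singular vectors can be chosen to agree with eigenvectors) to pin down the pairing, and to handle degenerate singular-value multiplicities without circular reasoning. Everything after that is the bookkeeping of quadratic identities sketched above.
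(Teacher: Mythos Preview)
Your approach is exactly the paper's: verify that $DC$ and $C^*C+DD^*$ are simultaneously unitarily diagonalizable with eigenvalues $z_j$ and $h_j=\sigma_j^2+\delta_j^2$, then apply Theorem~\ref{principal} and translate \eqref{cond hip 1} into \eqref{desigual DC CD comu} via the foci identities $|f_{j,+}|^2+|f_{j,-}|^2=\tfrac12|\alpha+\beta|^2+\tfrac12|\Delta_j|$ and $2\Re(\overline{f_{j,+}}f_{j,-})=\tfrac12|\alpha+\beta|^2-\tfrac12|\Delta_j|$, together with the parallelogram law; your algebra there is correct.

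The step you rightly flag as the obstacle is handled in the paper not by an ad hoc SVD-alignment argument but by quoting a known characterization \cite[p.~426]{HJ}: $CD$ and $DC$ are \emph{both} normal if and only if there exist unitaries $U,V$ such that $U^*C^*V=\Sigma$ and $U^*DV=\Gamma$ are simultaneously diagonal. Once you have this, $U^*(C^*C+DD^*)U=\Sigma\Sigma^*+\Gamma\Gamma^*$ and $U^*DC\,U=\Gamma\Sigma^*$ are diagonal for free, and the pairing $h_j=\sigma_j^2+\delta_j^2$, $z_j=\sigma_j\delta_j e^{i\chi_j}$ is immediate with no multiplicity headaches. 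That single citation is all that is missing from your outline.
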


\begin{proof} Without loss of generality, let $n\leq 2r$.  The hypothesis on $CD$ and $DC$ being both  normal is equivalent to the existence of unitary matrices $U,V$ such that $U^*C^*V=\Sigma$, $U^*D\,V=\Gamma$ are
 both dia\-gonal {\rm \cite[p. 426]{HJ}}. This implies that 
 $$
  U^*(C^*C + DD^*) U =\Sigma \Sigma^* +\Gamma\Gamma^*   \qquad \hbox{and} \qquad 
  U^*DC\,U =\Gamma \Sigma^*
 $$ 
are  diagonal matrices too, that is, $C^*C+DD^*$ and $DC$  are simultaneously unitarily diagonalizable, 
 with  eigenvalues  $h_j = \sigma^2_j+\delta_j^2$
and  $z_j=\sigma_j\delta_j \e^{i\chi_j}$, for some $\chi_j\in \mathbb{R}$, for $j=1, \dots, n-r$, respectively. 
Then we are under the hypothesis of  Theorem \ref{principal}.
Moreover, 
$$
2\Re (\overline{f_{j+}}f_{j-}) =\, \frac{1}{2}|\alpha+\beta|^2 - \frac{1}{2}|\Delta_j|, \qquad |f_{j+}|^2+|f_{j-}|^2 =\, \frac{1}{2}|\alpha+\beta|^2 + \frac{1}{2}|\Delta_j|. 
$$
Thus, inequalities (\ref{desigual DC CD comu}) are equivalent to 
$$
\big|\,|\alpha-\beta|^2 -  2\sigma^2_j-2\delta_j^2\, \big|\, <  \, |\Delta_j| 
\qquad j=1, \dots, n-r.
$$
By Theorem \ref{principal}, the result follows.
\end{proof}

\medskip
\smallskip

As illustrated by the matrices in Example \ref{ex1} 
 the main result in Theorem~\ref{principal} holds even if $CD$ is not normal, that is, it extends Corollary \ref{Rmk DC CD COMUT}. 
In fact, for $C,D$ as in Example \ref{ex1}, we have that
$$
CD=\left[
\begin{array}{ccc}
 -1+3 i & 4-2 i & 0 \\
 -i & -4+2 i & 0 \\
 0 & 0 & 1 \\
\end{array}
\right]
$$
is not normal and Corollary \ref{Rmk DC CD COMUT} does not apply.


In particular, Corollary \ref{Rmk DC CD COMUT} materializes for $A\in M_n$ of the form (\ref{bloco}),  $n=2r$, when $C=U-I_r$, $D=U^*+I_r$ and $U \in M_r$ is a unitary matrix. Then  $CD=DC=2i\,\Im (U)$ is skew-Hermitian and $C^*C+DD^*=4I_{r}$, corresponding to $h_j=4$ and to $z_j$ zero or pure imaginary with $|z_j|\leq 2$, $j=1, \dots, r$,  in Theorem \ref{principal}.

\medskip

\begin{Exa} \label{ex10} Let $J=I_3\oplus -I_3$ and $A$ be of the form {\rm (\ref{bloco})} with $\alpha=10,$ $\beta=7$,  $C=U-I_4$, $D=U^*+I_4$, considering the unitary matrix
$$
U\,=\left[
\begin{array}{ccc}
 \frac{1}{2}+\frac{i}{2} & \frac{1}{2}-\frac{i}{2} & 0 \\
 \frac{1}{2}-\frac{i}{2} & \frac{1}{2}+\frac{i}{2} & 0 \\
 0 & 0 & -i \\
\end{array}
\right].
$$
In this case, $DC$ has eigenvalues $2i,0,-2i$ 
and  $C^J(A)$ consists of three   non-nested co-centered hyperbolas, depicted in Figure 5. 
The pseudo-convex hull of these  hyperbolas yields a vertical flat portion  on the boundary of $W^J(A)$.

\begin{figure}[!h]\label{Fvert}
\centering
	\includegraphics[scale=0.67]{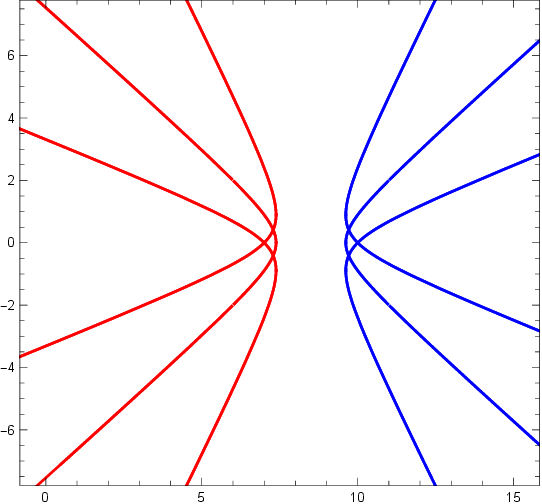} 
\caption{$C^J(A)$ in Example \ref{ex10}}
\end{figure}	
\end{Exa}


In general, it is known that $W^J(A)$ contains a flat portion on the boundary, if   $H_\theta(A)$ has a multiple eigenvalue $\lambda$, for  some $\theta \in \mathbb{R}$,
that is a maximum or a minimum of 
$\sigma^J_+(H_\theta (A))$ or $\sigma^J_-(H_\theta (A))$, 
 $\lambda=\lambda_R(\theta )$ or $\lambda=\lambda_L(\theta )$, 
 and the set 
 of numbers $[H_{\theta+ \frac{\pi}{2}}(A) x,x]_J$, when $x$ ranges over the eigenspace $\cal E_\lambda$ of $H_\theta (A)$ associated to $\lambda$, 
 is not a singleton.
In this event, 
if $x_1, x_2$ are eigenvectors in $\cal E_\lambda$, satisfying $[x_1,x_1]_J[x_2,x_2]_J>0$ and
 $$[H_{\theta+\frac{\pi}{2}}(A)\,x_1, x_1]_J / [x_1, x_1]_J \, \neq\,  [H_{\theta+\frac{\pi}{2}}(A)\,x_2, x_2]_J  / [x_2,x_2]_J,$$ 
then the boundary of 
$W^J(A)$ contains the line segment joining $[Ax_1,x_1]_J$ and $[Ax_2, x_2]_J$. 

\section{Classes of matrices with hyperbolic $W^J(A)$}

\smallskip

In this section, 
 selected classes of block matrices $A$ of the form {\rm (\ref{bloco})} with hyperbolic shaped $W^J(A)$ are presented.
The next corollary  applies to  Hermitian matrices $A$ of these block form.

\medskip

\begin{corol}\label{Hermitica}
Let $A\in M_n$ be non-diagonal of the form {\rm (\ref{bloco})} with  $\alpha, \beta \in \mathbb R$ and $D=C^*$.
Let $\sigma_1>\cdots >\sigma_s$ be the non-zero singular values of $C$.  
Then  $C^J(A)$ consists of $s$ nested hyperbolas ${\tilde H}_1, \dots, {\tilde H}_s$ and, additionally, the points $\alpha, \beta$, if $C$ is not full rank, the point $\alpha$ if $n<2r$, and $\beta$ if $n>2r$.
Each  hyperbola ${\tilde H}_j$ is centered at $\frac{1}{2}(\alpha+\beta)$, has horizontal transverse semi-axis of lenght $|\omega|$ and vertical non-transverse semi-axes of length $\sigma_j$. The set $W^J(A)$ is the non-degenerate hyperbolic disc bounded by ${\tilde H}_1$.

\end{corol}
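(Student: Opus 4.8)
The plan is to verify the hypotheses of Theorem~\ref{principal} (equivalently Corollary~\ref{Rmk DC CD COMUT}) for the Hermitian block matrix $A$ and then read off the geometric data. First I would observe that since $D=C^*$, both products $CD=CC^*$ and $DC=C^*C$ are Hermitian, hence normal; moreover $C^*C+DD^*=C^*C+C^*C$... wait, more carefully $C^*C+DD^*=C^*C+C^*(C^*)^*=C^*C+CC^*$ — no: $DD^*=C^*(C^*)^*=C^*C$ is wrong since $D=C^*$ gives $DD^*=C^*C$. Let me restate: $DD^*=C^*(C^*)^*=C^*C$, so actually one must be careful about which of $CC^*$, $C^*C$ appears. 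In any case, $DC=C^*C$ is positive semidefinite and $C^*C+DD^*=C^*C+C^*C$ is a positive multiple-free combination that commutes with $C^*C$ trivially (everything is a polynomial in $C^*C$ after the appropriate identification via the singular value decomposition of $C$). So the hypotheses of Theorem~\ref{principal} are met, with the eigenvalues $z_j$ of $DC$ being the squares $\sigma_j^2$ of the singular values of $C$ (padded with zeros), and the eigenvalues $h_j$ of $C^*C+DD^*$ being $2\sigma_j^2$ (padded with zeros as well, with the padding pattern dictated by whether $n<2r$, $n=2r$ or $n>2r$).

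The second step is to plug these values into the quantities in Theorem~\ref{principal}. Since $\alpha,\beta\in\R$, we have $(\alpha-\beta)^2\geq 0$ real, so $\Delta_j=(\alpha-\beta)^2+4z_j=(\alpha-\beta)^2+4\sigma_j^2>0$ is positive real (for $\sigma_j\neq 0$; and $=(\alpha-\beta)^2>0$ for the zero eigenvalues since $\alpha\neq\beta$), whence $|\Delta_j|=\Delta_j$ and ${\rm Arg}(\Delta_j)=0$, so $\phi_j=0$ — the hyperbolas are unrotated. The condition \eqref{cond hip 1} becomes $|\alpha-\beta|^2-\Delta_j\leq 2h_j<|\alpha-\beta|^2+\Delta_j$, i.e. $-4\sigma_j^2\leq 4\sigma_j^2<2|\alpha-\beta|^2+4\sigma_j^2$; the left inequality is strict whenever $\sigma_j>0$ and the right is automatic since $\alpha\neq\beta$. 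Hence every hyperbola ${\cal\hat H}_j$ attached to a nonzero $\sigma_j$ is nondegenerate, while the zero singular values produce the isolated points $f_{j,\pm}=\tfrac12(\alpha+\beta)\pm\tfrac12|\alpha-\beta|$, i.e. $\alpha$ and $\beta$; these appear exactly when $C$ is not of full rank. The extra lone point $\alpha$ (if $n<2r$) or $\beta$ (if $n>2r$) comes from the last clause of Proposition~\ref{lema1}(a) / Theorem~\ref{principal} as in the general statement.

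The third step is the computation of the axis lengths. From \eqref{eixos}, $N_j^2=\tfrac12\Delta_j+\tfrac12|\alpha-\beta|^2-h_j=\tfrac12((\alpha-\beta)^2+4\sigma_j^2)+\tfrac12(\alpha-\beta)^2-2\sigma_j^2=(\alpha-\beta)^2$, so the transverse semi-axis is $\tfrac12|\alpha-\beta|=|\omega|$; similarly $M_j^2=\tfrac12\Delta_j-\tfrac12|\alpha-\beta|^2+h_j=2\sigma_j^2+\tfrac12(\alpha-\beta)^2-\tfrac12(\alpha-\beta)^2=2\sigma_j^2$... which gives $\sqrt{2}\,\sigma_j$, not $\sigma_j$ — so here I must recheck the bookkeeping of $h_j$: if in fact $DD^*=CC^*$ and the simultaneously-diagonalized value is $h_j=\sigma_j^2+\sigma_j^2$ this stands, but the stated claim $M_j=\sigma_j$ forces $h_j=\sigma_j^2$ together with... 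Actually the resolution is that in Corollary~\ref{Hermitica} the relevant $h_j$ must be just $\sigma_j^2$: one of $CC^*$, $C^*C$ contributes and the block structure with $n\le 2r$ means $DD^*\in M_{n-r}$ has the $\sigma_j^2$ as its (only nonzero) eigenvalues while $C^*C\in M_{n-r}$ contributes $\sigma_j^2$ again — so $h_j=2\sigma_j^2$ does seem right. The cleanest route, and the one I would actually write, is to invoke Corollary~\ref{Rmk DC CD COMUT} directly: there $\sigma_j$ are the singular values of $C$ and $\delta_j$ those of $D=C^*$, so $\delta_j=\sigma_j$, $h_j=\sigma_j^2+\delta_j^2=2\sigma_j^2$, and the non-transverse axis length given there is $\sqrt{|f_{j+}|^2+|f_{j-}|^2-|\alpha|^2-|\beta|^2+\sigma_j^2+\delta_j^2}$; with $\alpha,\beta$ real one computes $|f_{j+}|^2+|f_{j-}|^2=\tfrac12(\alpha+\beta)^2+\tfrac12\Delta_j$ and after substitution this collapses to $\sqrt{2}\,\sigma_j$ — indicating the corollary as stated should read "semi-axes of length $\sqrt{2}\,\sigma_j$" or that $\sigma_j$ denotes something rescaled.

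\textbf{The main obstacle} I anticipate is precisely this normalization bookkeeping: tracking whether the relevant spectral data is $C^*C$ versus $CC^*$ versus their sum, and getting the constant in the non-transverse axis exactly right; once the dictionary $z_j=\sigma_j^2$, $h_j=2\sigma_j^2$, $\phi_j=0$ is fixed, the nesting (the $\sigma_j$ are distinct and decreasing, all hyperbolas share center $\tfrac12(\alpha+\beta)$ and the common transverse semi-axis $|\omega|$, so larger $\sigma_j$ gives a strictly larger branch, hence genuine nesting) and the identification of $W^J(A)$ as the hyperbolic disc bounded by ${\tilde H}_1$ (the outermost, from $\sigma_1$, via the pseudo-convex-hull clause of Theorem~\ref{principal}, the inner hyperbolas and the points $\alpha,\beta$ lying inside) follow with no further work.
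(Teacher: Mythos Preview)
Your approach is exactly that of the paper: apply Theorem~\ref{principal} with $D=C^*$, so that $DC=C^*C$ and $C^*C+DD^*=2C^*C$ are simultaneously diagonalized, giving $z_j=\sigma_j^2$ and $h_j=2\sigma_j^2$. The verification of \eqref{cond hip 1}, the location of the foci, the nesting argument, and the handling of the singleton components and of the case $n>2r$ are all as in the paper.

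The only issue is an arithmetic slip in your computation of $M_j$, which is the source of the ``normalization obstacle'' you flag. You write
\[
M_j^2=\tfrac12\Delta_j-\tfrac12|\alpha-\beta|^2+h_j=2\sigma_j^2+\tfrac12(\alpha-\beta)^2-\tfrac12(\alpha-\beta)^2=2\sigma_j^2,
\]
but this drops one of the two $2\sigma_j^2$ contributions: $\tfrac12\Delta_j=\tfrac12(\alpha-\beta)^2+2\sigma_j^2$ and $h_j=2\sigma_j^2$, so in fact $M_j^2=4\sigma_j^2$. Since $M_j$ in \eqref{eixos} is the \emph{full} non-transverse axis, the semi-axis is $\sigma_j$, matching the statement. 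The same slip recurs in your check via Corollary~\ref{Rmk DC CD COMUT}: with $\alpha,\beta$ real one has $|f_{j+}|^2+|f_{j-}|^2=\alpha^2+\beta^2+2\sigma_j^2$, so the expression there also equals $4\sigma_j^2$, not $2\sigma_j^2$. Once this is corrected there is no discrepancy and no obstacle; the proof is complete as you outlined it.
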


\begin{proof}  Firstly, let $n\leq 2r$. Clearly, $A$ is under the conditions of  Theorem~\ref{principal},   $h_j=2\sigma^2_j$, $z_j=\sigma^2_j$, $j=1, \dots, s$,  and {\rm \eqref{cond hip 1}} holds for $j=1, \dots, s$, with strict LHS inequality.
Hence, if $n=2r$, then $C^J(A)$ consists of $s$ non-degenerate hyperbolas corresponding to the non-zero singular values of $C$ and the points $\alpha, \beta$, if $C$ is not full rank. If $n<2r$, then  $C^J(A)$ consists
of these hyperbolas and the point $\alpha$. By Theorem~\ref{principal}, the hyperbolas have foci at 
$$
\frac{1}{2}(\alpha+\beta) \pm \sqrt{\omega^2+\,\sigma^2_j}, \qquad  j=1,\dots,s,
$$
all have the same transverse semi-axis of lenght $|\omega|$ and non-tranverse semi-axis of lenght $\sigma_j$, $j=1,\dots,s$, respectively. Thus, they are nested 
and  $W_J(A)$ is the non-degenerate hyperbolic disc bounded by the outer hyperbola.

If $n>2r$, replacing $C$ by $D$ 
 and $\alpha$ by $\beta$ 
leads to the same conclusion, because $C,C^*$ have the same non-zero singular values.
\end{proof}

\medskip
\smallskip

Theorem \ref{principal} holds, in particular, when $DC$ is a scalar multiple of the identity, in which case all $z_j$ are the same.
 The introduced notation  for the hyperbola ${\cal \hat H}_j$ and its foci $f_{j, \pm}$  from Section 3
is reused below.

\smallskip

\begin{corol}\label{corol DC escalar}  Let $A$ be of type {\rm (\ref{bloco})}, $DC = z_1 I_{n-r}$, 
$n\leq 2r$.
The set $W^J(A)$ is a non-degenerate hyperbolic disc with foci at $f_{1\pm}$ and non-transverse axis  of length $M_1$ if and only if 
\begin{equation}\label{condz1}
 \left||\alpha-\beta|^2-2\|C^*C+DD^*\|\right| \,<\, \left|(\alpha-\beta)^2+4z_1\right|.
\end{equation}
In this case, all the hyperbolic components $\hat{\cal H}_1,\dots,\hat{\cal H}_{n-r}$ in $C_J(A)$ are nested.
\end{corol}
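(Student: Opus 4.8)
The plan is to derive Corollary \ref{corol DC escalar} directly from Theorem \ref{principal} by specializing to the case $DC = z_1 I_{n-r}$. First I would observe that a scalar matrix trivially commutes with $C^*C+DD^*$, so the hypothesis of Theorem \ref{principal} is automatically satisfied, and all eigenvalues $z_j$ of $DC$ coincide with $z_1$. Consequently $\Delta_j = (\alpha-\beta)^2 + 4z_1 =: \Delta_1$ is the same for every $j$, so all the hyperbolas $\hat{\cal H}_1, \dots, \hat{\cal H}_{n-r}$ share the same pair of foci $f_{1,\pm}$ and the same transverse direction. The $h_j$ are the eigenvalues of $C^*C+DD^*$, which (being positive semidefinite) satisfy $0 \le h_j \le \|C^*C+DD^*\|$, with $h_1 := \|C^*C+DD^*\|$ attained.

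Next I would analyze condition \eqref{cond hip 1}, which in this setting reads $|\alpha-\beta|^2 - |\Delta_1| \le 2h_j < |\alpha-\beta|^2 + |\Delta_1|$ for all $j$. Since the $h_j$ range over a set whose maximum is $\|C^*C+DD^*\|$ and whose minimum is nonnegative, the strict right-hand inequality holds for all $j$ precisely when it holds for the largest $h_j$, i.e. when $2\|C^*C+DD^*\| < |\alpha-\beta|^2 + |\Delta_1|$, which rearranges to $|\alpha-\beta|^2 - 2\|C^*C+DD^*\| > -|\Delta_1|$. Meanwhile the left-hand inequality $|\alpha-\beta|^2 - |\Delta_1| \le 2h_j$ holds for all $j$ iff it holds for the smallest $h_j$; but note that \eqref{condz1} as stated, $\bigl||\alpha-\beta|^2 - 2\|C^*C+DD^*\|\bigr| < |\Delta_1|$, together with the positive semidefiniteness bound $0\le 2h_j \le 2\|C^*C+DD^*\|$, forces $|\alpha-\beta|^2 - |\Delta_1| < 2\|C^*C+DD^*\| - |\Delta_1| \le 2h_j$ — actually I should be careful here: the left inequality needs $|\alpha-\beta|^2 - |\Delta_1| \le 2h_j$ for the smallest $h_j$, which could be $0$; so I need \eqref{condz1} to imply $|\alpha-\beta|^2 - |\Delta_1| \le 0$, i.e. $|\alpha-\beta|^2 \le |\Delta_1|$. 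This does follow from the left branch of the absolute-value inequality in \eqref{condz1}: $|\alpha-\beta|^2 - 2\|C^*C+DD^*\| > -|\Delta_1|$ is the right branch, while $|\alpha-\beta|^2 - 2\|C^*C+DD^*\| < |\Delta_1|$ combined with $2\|C^*C+DD^*\|\ge 0$ gives $|\alpha-\beta|^2 - |\Delta_1| < 2\|C^*C+DD^*\|$, and I want the stronger $\le 2h_{\min}$. The cleanest route: show \eqref{condz1} $\iff$ \eqref{cond hip 1} holds for all $j$ by testing it against both $h=0$ (or $h = h_{\min}$) and $h = \|C^*C+DD^*\|$.

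More precisely, I would argue: \eqref{cond hip 1} for all $j$ is equivalent to the single chain $|\alpha-\beta|^2 - |\Delta_1| \le 2h_{\min}$ and $2h_{\max} < |\alpha-\beta|^2 + |\Delta_1|$, where $h_{\max} = \|C^*C+DD^*\|$. The right inequality is equivalent to $2\|C^*C+DD^*\| - |\alpha-\beta|^2 < |\Delta_1|$. For the left, I would use that when the right inequality holds one automatically has $|\alpha-\beta|^2 \geq 2h_{\max} - |\Delta_1| + \varepsilon > 2h_{\min} - |\Delta_1|$ only if $h_{\min}$ is close to $h_{\max}$, which need not be true — so instead I observe that the left inequality $|\alpha-\beta|^2 - |\Delta_1| \leq 2h_{\min}$ is implied by $|\alpha-\beta|^2 - |\Delta_1| \leq 0$, and $|\alpha-\beta|^2 < |\Delta_1| + 2\|C^*C+DD^*\|$ does not give this directly; rather, the left branch of \eqref{condz1}, namely $-(|\alpha-\beta|^2 - 2\|C^*C+DD^*\|) < |\Delta_1|$ is the right inequality, while the claim $|\alpha-\beta|^2 - |\Delta_1| \le 2h_{\min}$ is handled by noting it is weakest when $h_{\min}=0$, requiring $|\alpha-\beta|^2 \le |\Delta_1|$, which is exactly the other branch $|\alpha-\beta|^2 - 2\|C^*C+DD^*\| < |\Delta_1|$ combined with... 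The honest statement is that \eqref{condz1} as written is the condition for \emph{nondegenerate} hyperbolicity of the outermost hyperbola together with nonemptiness; I would reconcile the two absolute-value branches with the two inequalities in \eqref{cond hip 1} applied at the extreme eigenvalues $h_{\min}, h_{\max}$ of the positive semidefinite matrix $C^*C+DD^*$, invoking Theorem \ref{principal} to conclude that $W^J(A)$ is the pseudo-convex hull of the $\hat{\cal H}_j$. Finally, since all $\hat{\cal H}_j$ have the same foci $f_{1,\pm}$ (same $\Delta_1$) but non-transverse axes $M_j$ that are monotone in $h_j$, they are nested, and the pseudo-convex hull of a nest of confocal (co-centered, co-directed) hyperbolas is simply the hyperbolic disc bounded by the widest one, which has non-transverse axis $M_1$ corresponding to $h_1 = \|C^*C+DD^*\|$. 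The main obstacle I expect is precisely the bookkeeping in the previous paragraph: carefully matching the symmetric form $\bigl||\alpha-\beta|^2 - 2\|C^*C+DD^*\|\bigr| < |(\alpha-\beta)^2 + 4z_1|$ to the asymmetric double inequality \eqref{cond hip 1} quantified over all eigenvalues $h_j$, using positive semidefiniteness of $C^*C+DD^*$ to reduce "for all $j$" to the extreme eigenvalues, and checking that the degenerate sub-cases (a zero eigenvalue of $C^*C+DD^*$, or $M(\theta)$ singular) only contribute the points $\alpha,\beta$ or $f_{1,\pm}$ already inside the hull, so they do not affect the shape of $W^J(A)$.
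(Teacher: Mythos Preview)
Your overall strategy matches the paper's: apply Theorem~\ref{principal}, observe that $DC=z_1 I_{n-r}$ forces all $z_j=z_1$ and hence all $\Delta_j=\Delta_1$, so the hyperbolas $\hat{\cal H}_j$ are confocal and therefore nested, and the pseudo-convex hull is the disc bounded by the one with largest non-transverse axis, namely $\hat{\cal H}_1$ corresponding to $h_1=\|C^*C+DD^*\|$. The paper's proof says exactly this and then asserts that the result ``readily follows.''

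However, the obstacle you flag is real and you do not resolve it: why should \eqref{condz1}, which constrains only $h_1=h_{\max}$, guarantee the \emph{left} inequality in \eqref{cond hip 1} for every $j$, in particular for the smallest $h_j$? Your attempts at this go in circles. The missing ingredient is that the left inequality of \eqref{cond hip 1} is \emph{automatic} here, independently of \eqref{condz1}. Since $M(\theta)$ is positive semidefinite, its eigenvalues satisfy $\mu_j(\theta)=h_j-2|z_1|\cos(2\theta-\arg z_1)\ge 0$ for all $\theta$, which forces $h_j\ge 2|z_1|$. Then, by the triangle inequality,
\[
|\alpha-\beta|^2-|\Delta_1|\;=\;|(\alpha-\beta)^2|-|(\alpha-\beta)^2+4z_1|\;\le\; 4|z_1|\;\le\; 2h_j,
\]
so the left side of \eqref{cond hip 1} holds (non-strictly) for every $j$ with no hypothesis at all. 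Consequently \eqref{condz1} is precisely the conjunction of the right inequality of \eqref{cond hip 1} at $j=1$ (which, by $h_j\le h_1$, implies it for all $j$) and the \emph{strict} left inequality at $j=1$ (ensuring $\hat{\cal H}_1$ is non-degenerate). With this in hand, Theorem~\ref{principal} applies directly and your nesting argument finishes the proof. The paper's own proof is silent on this point, but this is the one-line fact that closes the gap you correctly identified.
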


\begin{proof} 
Under the hypothesis, the hyperbolas $\hat{\cal H}_1,\dots,\hat{\cal H}_{n-r}$ share the foci $f_{1\pm} $, 
and the axes lenghts $N_j, M_j$  in (\ref{eixos}) of  $\hat{\cal H}_j$ vary just according to the eigenvalue $h_j$ of $C^*C+DD^*$. Therefore, all the hyperbolas are nested. If $h_1=\|C^*C+DD^*\|$, 
then $N_1$ is the minimum transverse axis and, consequently, $M_1$ is the maximum  non-transverse axis. The result readily follows from Theorem
\ref{principal}.
\end{proof}

\smallskip

\begin{Exa} \label{ex5} Let $J=I_3\oplus -I_2$ and $A$ be of the form {\rm (\ref{bloco})} with 
$$\alpha=6+i,\qquad \beta=13, \qquad 
C=\left[\begin{array}{cc}
-2 & -1\\
-4 & -2\\
-3 & 0
\end{array}\right], \qquad D=\left[\begin{array}{ccc}
0 & 0 & 2\\
0 & 3 & -4
\end{array}\right].
$$
Then $DC=-6I_2$,
$$
C^*C+DD^*=\left[\begin{array}{cc}
33 & 2\\
2 & 30
\end{array}\right], \qquad \|C^*C+DD^*\|=34.
$$  
By simple computations, {\rm (\ref{condz1})} holds and, by Corollary \ref{corol DC escalar}, the set $W_J(A)$ is the hyperbolic disc, with foci at 
$$\frac{1}{2}(19+i)\pm \frac{1}{2}\sqrt{24-14\,i}$$
and non-transverse axis of length  
$\left(9+\sqrt{193}\right)^{\frac{1}{2}}$. 
By Theorem \ref{principal}, $C^J(A)$ consists of two co-centered hyperbolas and a point, $(6,1)$, displayed in Figure 6. 

 \begin{figure}[!h]\label{ExDCescalar}
\centering
\includegraphics[scale=0.7]{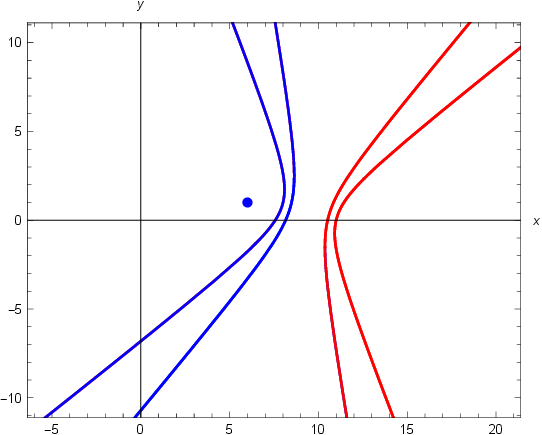}
\caption{Boundary generating curve of $W^J(A)$ in Example \ref{ex5}}
\end{figure}


\end{Exa}

A {\it quadratic matrix} is a matrix with minimal polynomial of degree $2$
 and it can be reduced, under a unitary similarity transformation, to a matrix $A$ of type \eqref{bloco} with $D=O$.
In this case, we easily find the next result (cf.\! \cite{BLPS, ELA quadr}).

\medskip

\begin{corol} Let $A$ be of type {\rm (\ref{bloco})} with  $C\neq O$ and  $D = O$. 
Then $W^J(A)$ is a non-degenerate hyperbolic disc, with
foci at $\alpha$ and $\beta$, and non-transverse axis of lenght $\|C\|$ if and only if $\|C\|<|\alpha -\beta|$.
\end{corol}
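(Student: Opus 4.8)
The plan is to derive this corollary as a direct specialization of Theorem \ref{principal} (or equivalently of Corollary \ref{corol DC escalar}) to the case $D=O$. First I would observe that $A$ is of the block form \eqref{bloco} with $C\neq O$ and $D=O$, so that $DC=O$ and $C^*C+DD^*=C^*C$; trivially $DC$ is normal and commutes with $C^*C+DD^*$, hence the hypotheses of Theorem \ref{principal} are met. With $z_j=0$ for all $j$ we have $\Delta_j=(\alpha-\beta)^2$, so $|\Delta_j|=|\alpha-\beta|^2$ and the foci are $f_{j,\pm}=\frac{1}{2}(\alpha+\beta)\pm\frac{1}{2}(\alpha-\beta)$, i.e. $\{\alpha,\beta\}$, independently of $j$. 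Writing $h_1\geq\cdots\geq h_s>0$ for the nonzero eigenvalues of $C^*C$, i.e. $h_j=\sigma_j^2$ where $\sigma_1\geq\cdots\geq\sigma_s>0$ are the nonzero singular values of $C$, formula \eqref{eixos} gives $N_j=(|\alpha-\beta|^2-\sigma_j^2)^{1/2}$ and $M_j=\sigma_j$.

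Next I would run condition \eqref{cond hip 1} through this substitution: it reads $|\alpha-\beta|^2-|\alpha-\beta|^2\leq 2\sigma_j^2<|\alpha-\beta|^2+|\alpha-\beta|^2$, i.e. $0\leq 2\sigma_j^2<2|\alpha-\beta|^2$. Since $\sigma_j>0$ the left inequality is strict and holds automatically, so by the last sentence of Theorem \ref{principal} every $\hat{\mathcal H}_j$ is a nondegenerate hyperbola; and the right inequality holds for all $j$ if and only if $\sigma_1^2<|\alpha-\beta|^2$, that is, $\|C\|<|\alpha-\beta|$. Thus \eqref{cond hip 1} is equivalent to $\|C\|<|\alpha-\beta|$, and in that case $C^J(A)$ consists of the hyperbolas $\hat{\mathcal H}_1,\dots,\hat{\mathcal H}_s$ (all with the same foci $\alpha,\beta$, transverse semi-axis depending on $j$, non-transverse semi-axis $\sigma_j$) together with the point $\alpha$ if $n<2r$ (or $\beta$ if $n>2r$). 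Because the common foci are fixed and the non-transverse semi-axes $\sigma_j$ are strictly decreasing, the hyperbolas are nested, $\hat{\mathcal H}_1$ being the outermost, and its non-transverse axis has length $2\sigma_1=2\|C\|$ — here I should be mildly careful about the "axis" vs. "semi-axis" convention used in the statement, matching whatever was fixed in Section 2 and in Corollary \ref{corol DC escalar}. The pseudo-convex hull of $\hat{\mathcal H}_1\cup\cdots\cup\hat{\mathcal H}_s$ is then exactly the hyperbolic disc bounded by $\hat{\mathcal H}_1$, and the extra point $\alpha$ (or $\beta$) already lies inside it, so $W^J(A)$ is that non-degenerate hyperbolic disc with foci $\alpha,\beta$ and non-transverse axis $\|C\|$.

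For the converse direction I would note that if $\|C\|\geq|\alpha-\beta|$, then \eqref{cond hip 1} fails for $j=1$, so by the equivalence (a)$\Leftrightarrow$(b) in Theorem \ref{principal} the curve $C^J(A)$ is not of the stated hyperbolic form; more directly, $H_\theta(B)$ acquires non-real eigenvalues for the relevant $\theta$ (since $\mu_1(\theta)=\sigma_1^2$ is constant and can exceed $4\omega_\theta^2$ for all $\theta$ when $\sigma_1\geq|\alpha-\beta|=2|\omega|$), so $W^J(H_\theta(A))$ fills the whole line and $W^J(A)$ degenerates, hence is not a non-degenerate hyperbolic disc. I do not anticipate a genuine obstacle here — everything is bookkeeping — but the one point that needs care is checking that when $C$ is not full rank the additional singletons $\{\alpha,\beta\}$ arising from the zero singular values (and the separate point $\alpha$ or $\beta$ coming from $n\neq 2r$) do not enlarge the pseudo-convex hull beyond the hyperbolic disc bounded by $\hat{\mathcal H}_1$; since $\alpha,\beta$ are the common foci of all the $\hat{\mathcal H}_j$, they lie strictly inside $\hat{\mathcal H}_1$, so the hull is unaffected, exactly as in the final line of the proof of Theorem \ref{principal}.
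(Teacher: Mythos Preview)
Your proposal is correct and follows essentially the same route as the paper: the paper invokes Corollary~\ref{corol DC escalar} with $z_1=0$, observes that $f_{1\pm}$ reduce to $\alpha,\beta$, and simplifies condition~\eqref{condz1} to $\big||\alpha-\beta|^2-2\|C\|^2\big|<|\alpha-\beta|^2$, which (using $C\neq O$) is equivalent to $\|C\|<|\alpha-\beta|$. Your argument via Theorem~\ref{principal} is the same specialization unpacked one level further; the only slip is the factor of~$2$ you flagged --- in the paper's convention $M_j$ in~\eqref{eixos} is already the full axis length, so $M_1=\sigma_1=\|C\|$, not $2\|C\|$.
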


\begin{proof} Since  $D=O$, the result follows from Corollary \ref{corol DC escalar} with $z_1=0$. In this case,   
$f_{1\pm}$ reduce to $\alpha$, $\beta$. Further, $\|C^*C\|=\|C\|^2\neq 0$ and  the  non-degenerate hiperbolicity condition (\ref{condz1}) of $W^J(A)$ becomes $
\left||\alpha-\beta|^2-2\|C\|^2\right| \,<\, \left|\alpha-\beta\right|^2,
$ 
which can equivalently be simplfied to $\|C\|<|\alpha -\beta|$.
\end{proof}
 
 \medskip\medskip
	
If $r=n-1$, we have $J=I_{n-1}\oplus -I_1$ and the matrix  in (\ref{bloco}) reduces to an {\it arrowhead matrix} of type
\begin{equation}\label{arrow}
\left[
\begin{array}{ccc|c}
\!\alpha &  &  & c_1 \\
     & \! \ddots &  & \vdots \\
     &    &   \! \alpha & c_{r} \\ \hline
d_1  &  \cdots & \! d_{r} & \beta
\end{array}
\right],
\end{equation}
with zeros at the omitted entries of the first diagonal block. 
The next result holds, in parallel to the classical numerical range elliptical case~\cite{Linden, Chien}. 

\medskip
 
\begin{corol}\label{arrow1} For an arrowhead matrix  $A\in M_n$ of type  {\rm (\ref{arrow})}, $r=n-1$,
${\bf c}=(c_1, \ldots, c_{r})$, ${\bf d}=(d_1, \ldots, d_{r})$,
the set $W^J(A)$ is a non-degenerate hyperbolic disc, 
with foci at 
$$\frac{1}{2}(\alpha+\beta)\pm \frac{1}{2}\sqrt{(\alpha-\beta)^2+ 4\,{\bf c}^T{\bf d}},$$
 non-transverse axis
of length
$$\left( \frac{1}{2}\left|(\alpha-\beta)^2 + 4\,{\bf c}^T {\bf d}\right|-\frac{1}{2}|\alpha-\beta|^2 +\|{\bf c}\|^2+  \|{\bf d}\|^2 \right)^{\!\frac{1}{2}},$$ 
if and only if
$$
\left||\alpha-\beta|^2-2\|{\bf c}\|^2 - 2\|{\bf d}\|^2\right| \,<\, \left|(\alpha-\beta)^2 + 4\,{\bf c}^T {\bf d}\right|.
$$
\end{corol}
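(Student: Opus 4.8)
The plan is to obtain Corollary~\ref{arrow1} as a direct specialization of Corollary~\ref{corol DC escalar} (equivalently, of Theorem~\ref{principal}) to the arrowhead case $r=n-1$. First I would identify the blocks: an arrowhead matrix of type~(\ref{arrow}) is exactly a block matrix of type~(\ref{bloco}) with $r=n-1$, $n-r=1$, where the off-diagonal blocks degenerate to the vectors $C=\mathbf{c}^{\,T}\in M_{r,1}$ (a column) and $D=\mathbf{d}\in M_{1,r}$ (a row), writing $\mathbf{c}=(c_1,\dots,c_r)$, $\mathbf{d}=(d_1,\dots,d_r)$ as in the statement. Then $DC=\mathbf{d}\,\mathbf{c}^{\,T}=\mathbf{c}^{\,T}\mathbf{d}$ is a $1\times 1$ matrix, hence automatically a scalar multiple of $I_{n-r}=I_1$, so the hypothesis of Corollary~\ref{corol DC escalar} is satisfied with $z_1=\mathbf{c}^{\,T}\mathbf{d}$ (and, trivially, $DC$ normal and commuting with $C^*C+DD^*$, so Theorem~\ref{principal} applies).

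Next I would compute the two quantities entering Corollary~\ref{corol DC escalar}. With $z_1=\mathbf{c}^{\,T}\mathbf{d}$ we get $\Delta_1=(\alpha-\beta)^2+4z_1=(\alpha-\beta)^2+4\,\mathbf{c}^{\,T}\mathbf{d}$, so the foci
$$
f_{1,\pm}=\tfrac12(\alpha+\beta)\pm\tfrac12\sqrt{\Delta_1}
=\tfrac12(\alpha+\beta)\pm\tfrac12\sqrt{(\alpha-\beta)^2+4\,\mathbf{c}^{\,T}\mathbf{d}}
$$
match the stated foci. For the norm term, $C^*C+DD^*$ is here a $1\times 1$ matrix equal to $\overline{\mathbf{c}}^{\,T}\mathbf{c}+\mathbf{d}\,\overline{\mathbf{d}}^{\,T}=\|\mathbf{c}\|^2+\|\mathbf{d}\|^2$, whence $\|C^*C+DD^*\|=\|\mathbf{c}\|^2+\|\mathbf{d}\|^2=h_1$. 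Substituting into the non-transverse axis length $M_1$ from~(\ref{eixos}),
$$
M_1=\left(\tfrac12|\Delta_1|-\tfrac12|\alpha-\beta|^2+h_1\right)^{\!\frac12}
=\left(\tfrac12\bigl|(\alpha-\beta)^2+4\,\mathbf{c}^{\,T}\mathbf{d}\bigr|-\tfrac12|\alpha-\beta|^2+\|\mathbf{c}\|^2+\|\mathbf{d}\|^2\right)^{\!\frac12},
$$
which is precisely the announced length; and condition~(\ref{condz1}) becomes
$$
\bigl|\,|\alpha-\beta|^2-2(\|\mathbf{c}\|^2+\|\mathbf{d}\|^2)\,\bigr|<\bigl|(\alpha-\beta)^2+4\,\mathbf{c}^{\,T}\mathbf{d}\bigr|,
$$
the stated iff condition. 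Finally I would invoke Corollary~\ref{corol DC escalar} to conclude that $W^J(A)$ is the non-degenerate hyperbolic disc with these foci and this non-transverse axis exactly when the inequality holds.

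I expect no serious obstacle here: the only points requiring a line of care are (i) checking the transposition/conjugation bookkeeping so that $DC=\mathbf{c}^{\,T}\mathbf{d}$ (a bilinear, not sesquilinear, form) appears — this is why the stated foci involve $\mathbf{c}^{\,T}\mathbf{d}$ rather than an inner product — while $C^*C+DD^*$ does collapse to the sesquilinear quantity $\|\mathbf{c}\|^2+\|\mathbf{d}\|^2$; and (ii) noting that since $n-r=1$ we are automatically in the case $n<2r$ for $n\ge 3$ (or $n=2r$ only when $n=2$), so by Corollary~\ref{corol DC escalar} the possible extra point $\alpha$, if $z_1=0$ or $h_1=0$ forces degeneracy, is already absorbed into the hyperbolic disc and plays no role in the final shape. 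One may also remark, for completeness, that the reduction $n\le 2r$ needed by the cited results is here the only possibility ($r=n-1\ge n/2$ for $n\ge 2$), so no interchange of $C,D$ is required.
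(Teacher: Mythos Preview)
Your proposal is correct and follows essentially the same route as the paper: identify the off-diagonal blocks of the arrowhead matrix, observe that $DC=\sum_j c_jd_j=\mathbf{c}^T\mathbf{d}$ and $C^*C+DD^*=\|\mathbf{c}\|^2+\|\mathbf{d}\|^2$ are $1\times 1$ scalars, and apply Corollary~\ref{corol DC escalar}. The paper's own proof is just these two computations followed by ``the result is a direct consequence of Corollary~\ref{corol DC escalar}''; your version is simply a more detailed write-up of the same argument (with some additional, harmless remarks about the $n\le 2r$ hypothesis and the extra point $\alpha$).
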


\begin{proof}
The non-diagonal blocks of the arrowhead matrix $A$ are such that
$$
 DC\,=\,c_1d_1+\cdots+c_{r}d_{r}={\bf c}^T{\bf d},
$$  $$C^*C+DD^*= |d_1|^2+\cdots+ |d_{r}|^2+|c_1|^2+\cdots+ |c_{r1}|^2 = \|{\bf c}\|^2+ \|{\bf d}\|^2$$
 and the result is a direct consequence of Corollary \ref{corol DC escalar}. 
\end{proof}

\medskip



\medskip

Now, we consider the case when the spectrum of $M(\theta)$ is independent of $\theta$
and study the condition for non-degenerate hyperbolicity of  $W^J(A)$.

\smallskip

\begin{theorem}\label{Teorema2}  Let $A\in M_n$ be a block matrix of the form {\rm (\ref{bloco})} with 
 the spectrum of $M(\theta)$ in \eqref{Mtheta} independent of $\theta$.
The set $W^J(A)$ is a hyperbolic disc, 
with foci at $\alpha$ and $\beta$, 
 non-transverse axis of length $\|C-D^*\|$,
 if and only if  $$\|C-D^*\|<|\alpha-\beta|.$$
In this case,  all the hyperbolic components of $C^J(A)$ are nested,  with the same foci,  non-transverse axes of lenght    
$\sqrt{\mu_1}, \dots, \sqrt{\mu_s}$, where $\mu_1,\dots,\mu_s$ are the distinct non-zero eigenvalues  of $M(0)$.
\end{theorem}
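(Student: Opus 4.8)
The plan is to reduce Theorem~\ref{Teorema2} to Theorem~\ref{principal} (or rather to the circle of ideas around it) by showing that the hypothesis ``$\mathrm{spec}(M(\theta))$ independent of $\theta$'' forces the common eigenbasis situation, and then to compute the resulting foci and axes explicitly. First I would recall from \eqref{Mtheta} that
\[
M(\theta)\,=\,C^*C+DD^*-2\Re(\e^{-2i\theta}DC),
\]
so the statement that $\mathrm{spec}(M(\theta))$ does not depend on $\theta$ is a statement about the one-parameter family $C^*C+DD^*-\e^{-2i\theta}DC-\e^{2i\theta}(DC)^*$. Writing $S=C^*C+DD^*$ (Hermitian positive semidefinite) and $T=DC$, I would argue that constancy of the spectrum of $S-\e^{-2i\theta}T-\e^{2i\theta}T^*$ for all $\theta$ forces, via e.g. comparison of traces of powers, that $T$ is normal and commutes with $S$; indeed $\Tr\big(M(\theta)^2\big)=\Tr(S^2)+2\Tr(T^*T)-4\Re\big(\e^{-2i\theta}\Tr(ST)\big)+2\Re\big(\e^{-4i\theta}\Tr(T^2)\big)$ being constant in $\theta$ kills $\Tr(ST)$ and $\Tr(T^2)$, and pushing this through higher powers (or applying the argument after translating $S$) yields the simultaneous-diagonalizability hypothesis of Theorem~\ref{principal}. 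So we land inside the scope of Theorem~\ref{principal}, with eigenvalues $h_j$ of $S$ and $z_j$ of $T$.

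Next I would identify the data of Theorem~\ref{principal} under the extra assumption. Constancy of $\mu_j(\theta)=h_j-2\Re(z_j)\cos 2\theta-2\Im(z_j)\sin 2\theta$ (formula \eqref{eq1}) for each $j$ forces $z_j=0$ for all $j$, hence $DC=O$; therefore $\Delta_j=(\alpha-\beta)^2$ is the same for all $j$, the foci $f_{j,\pm}=\frac12(\alpha+\beta)\pm\frac12|\alpha-\beta|\,\e^{i\,\mathrm{Arg}(\alpha-\beta)}$ collapse to $\alpha$ and $\beta$, and the condition \eqref{cond hip 1} becomes $|\alpha-\beta|^2-|\alpha-\beta|^2\le 2h_j<|\alpha-\beta|^2+|\alpha-\beta|^2$, i.e. $0\le 2h_j<2|\alpha-\beta|^2$, i.e. $h_j<|\alpha-\beta|^2$ (the left inequality being automatic since $S\ge O$, and strict whenever $h_j\ne 0$). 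Since the eigenvalues of $M(\theta)$ are $\theta$-independent, they equal those of $M(0)=C^*C+DD^*-2\Re(DC)=C^*C+DD^*$ (recall $DC=O$); moreover $\mu_j=h_j$ and one checks $C^*C+DD^*=C^*C+CC^*$... — more to the point, with $DC=O$ the relevant matrix governing the axes is $(C-D^*)^*(C-D^*)=C^*C-C^*D^*-DC+DD^*=C^*C+DD^*$ since $DC=(C^*D^*)^*=O$, so $\|C-D^*\|^2=\|C^*C+DD^*\|=\max_j h_j$. Thus the non-degenerate hyperbolicity condition ``$h_j<|\alpha-\beta|^2$ for all $j$'' is exactly $\|C-D^*\|<|\alpha-\beta|$, and Theorem~\ref{principal} then gives that $C^J(A)$ is a nest of hyperbolas with common foci $\alpha,\beta$ and non-transverse semi-axes $\sqrt{\mu_j}=\sqrt{h_j}$, the outermost (largest non-transverse axis $\|C-D^*\|$, smallest transverse axis) bounding $W^J(A)$.

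The main obstacle I expect is the first step: rigorously deducing ``$DC$ normal and commutes with $C^*C+DD^*$'' — or directly ``$DC=O$'' — purely from constancy of $\mathrm{spec}(M(\theta))$, since a priori two Hermitian families can be isospectral without being simultaneously diagonalizable. The trace-of-powers argument sketched above does give $\Tr(M(\theta)^k)$ constant for every $k$, and expanding $\big(S-\e^{-2i\theta}T-\e^{2i\theta}T^*\big)^k$ and collecting the coefficient of each $\e^{2im\theta}$, $m\ne 0$, yields a family of polynomial identities in $S,T,T^*$; the cleanest route is probably to note that $M(\theta)$ and $M(0)$ are then unitarily similar, differentiate the resolvent or the spectral projections in $\theta$, and extract that the $\theta$-derivative $2i\big(\e^{-2i\theta}T-\e^{2i\theta}T^*\big)$ is, after conjugation, consistent only if $T=O$; alternatively, the paper may prefer to simply \emph{add} the hypothesis ``$DC$ normal commuting with $C^*C+DD^*$'' implicitly and then observe $\mu_j(\theta)$ constant $\Rightarrow z_j=0$, which is the short path. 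Either way, once $DC=O$ is in hand the remaining computation of foci, axes, and the hyperbolicity inequality is the routine specialization of Theorem~\ref{principal} described above, and the nesting follows because all $\hat{\mathcal H}_j$ share the foci $\alpha,\beta$ while the non-transverse axes $\sqrt{h_j}$ are totally ordered.
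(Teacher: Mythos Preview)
Your reduction to Theorem~\ref{principal} does not work: the hypothesis ``$\mathrm{spec}(M(\theta))$ independent of $\theta$'' does \emph{not} force $DC$ to be normal (let alone $DC=O$). The paper's own Example~\ref{ex7} is a counterexample: there
\[
DC=\begin{bmatrix}0&6\\0&0\end{bmatrix},\qquad C^*C+DD^*=\begin{bmatrix}9&0\\0&6\end{bmatrix},
\]
so $DC$ is nilpotent and non-normal, yet $M(\theta)$ has the $\theta$-independent eigenvalues $\tfrac{15}{2}\pm\tfrac{3}{2}\sqrt{17}$. Your trace identities $\Tr(ST)=0$, $\Tr(T^2)=0$ are satisfied here, but the higher-power argument you sketch cannot recover normality of $T$ from them; indeed no such argument can, since this $T$ is a witness. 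Consequently the chain ``spectrum constant $\Rightarrow$ $z_j=0$ $\Rightarrow$ $DC=O$'' collapses, and with it the specialization of \eqref{cond hip 1} that you carry out. (The paper even flags in the introduction that the last theorem of Section~5 is the one \emph{not} derived from Theorem~\ref{principal}.)

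The paper's route bypasses Theorem~\ref{principal} entirely and goes straight through Proposition~\ref{lema1} and the hyperbolicity criterion (Theorem~\ref{T2.11}). Since the spectrum of $M(\theta)$ is constant, it equals that of $M(0)=(C-D^*)^*(C-D^*)$, whose eigenvalues $\mu_j$ are the squared singular values of $C-D^*$; in particular $\mu_1=\|C-D^*\|^2$ with no need for $DC=O$. Plugging the constants $\mu_j$ into \eqref{lambdaj} gives, with $\tau=\mathrm{Arg}(\alpha-\beta)$,
\[
\lambda_{j,\pm}\big(H_\theta(B)\big)=\pm\tfrac{1}{2}\Big(|\alpha-\beta|^2-\mu_j-|\alpha-\beta|^2\sin^2(\theta-\tau)\Big)^{1/2},
\]
which, by Theorem~\ref{T2.11}, is exactly the eigenvalue pattern of a hyperbola with foci $\pm\tfrac{1}{2}(\alpha-\beta)$ and non-transverse axis $\sqrt{\mu_j}$, provided $\mu_j<|\alpha-\beta|^2$. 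Translating by $\tfrac12(\alpha+\beta)$ gives the confocal nested hyperbolas with foci $\alpha,\beta$; the outermost corresponds to $\mu_1=\|C-D^*\|^2$, whence the stated equivalence $\|C-D^*\|<|\alpha-\beta|$.
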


\begin{proof} By hypothesis, the spectrum of  $M(\theta)$ is  independent of $\theta$, thus it coincides with the spectrum of  $$M(0)\,=\, C^*C+DD^*-2\Re(DC)=(C-D^*)^*(C-D^*).$$ 
Suppose that the maximum eigenvalue of $M(0)$ is $\mu_1$, that is, $\mu_1=\|C-D^*\|^2$.
If $\|C-D^*\| < |\alpha-\beta|$ is assumed  and $\theta \in \Omega'_j=\left(\tau-\theta_j,\tau +\theta_j\right)$, where 
$$\tau={\rm Arg}(\alpha-\beta),
 \qquad \theta_j=\arctan \left(\frac{|\alpha-\beta|^2}{\mu_j}-1\right)^{\!\frac{1}{2}},$$
then $$  \mu_j \, < \, \big(\Re(\e^{-i\theta}(\alpha-\beta))\big)^2 \, = \, |\alpha-\beta|^2\cos^2(\theta-\tau), \qquad j=1, \dots, s,$$
and it follows from Proposition \ref{lema1} that  $H_\theta(B)$ is in class $\cal J$, with $s$ pairs of real eigenvalues of the form 
  $$
\lambda_{j, \pm}\big(H_\theta(B)\big)\,=\pm \frac{1}{2}\left(|\alpha-\beta|^2-\mu_j-|\alpha-\beta|^2\sin^2\!\left(\theta-\tau\right)\right)^{\frac{1}{2}}, \quad j=1, \dots, s.
  $$
We conclude that $C^J(B)$ contains $s$  hyperbolic components, all with the same foci $\pm \frac{1}{2}(\alpha-\beta)$ and 
non-transverse axes of lenght 
$\sqrt{\mu_j},$ 
$j=1, \dots, s.$
 This corres\-ponds to  $s$ hyperbolic nested components in $C^J(A)$, all of them with foci at  $\alpha$ and $\beta$, 
with the previous non-transverse axes lenghts.

 If $M(0)$ is not full rank, or $n\neq 2r$, then the remaining eigenvalues of $H_\theta (B)$ would additionally give 
 $\alpha$ and $\beta$, or just one of these,  as points in $C^J(A)$.
Clearly, $W^J(A)$ is bounded by the outer hyperbola, obtained for $j=1$, with non-transverse axis of length
$\|C-D^*\|$.  


 
 
The reverse implication is trivially satisfied. 
 \end{proof}


\medskip\medskip

If  for a subspace $S$ invariant under $C^*C+DD^*$, the column space
of $DC$ is contained in $S$ and  $S$ is contained in the 
null space of $DC$, then the eigenvalues  of {\rm (\ref{Mtheta})}  are independent of $\theta$, as observed in {\rm \cite{Geryba}} and
then Theorem \ref{Teorema2} holds.

\smallskip


\begin{Exa} \label{ex7} Let $J=I_2\oplus -I_2$ and $A$ be the block matrix of the form {\rm (\ref{bloco})} with 
$$
 \alpha=8+i,\quad \ \beta=4-i,\quad  \
 C=\left[\begin{array}{cc}
1-i & 1-i\\
-1 & 2
\end{array}\right], \ \quad
 D=\left[\begin{array}{cc}
1+i & 2\\
0 & 0
\end{array}\right].
$$
Then $$
DC=\left[\begin{array}{cc}
0 & 6\\
0 & 0
\end{array}\right] \qquad \hbox{and}\qquad
C^*C+DD^*=\left[\begin{array}{cc}
9 & 0\\
0 & 6
\end{array}\right].
$$
The matrix  in \eqref{Mtheta} becomes 
$$
M(\theta)=\left[
\begin{array}{cc}
 9 & -6\, \e^{-2 i \theta } \\
 -6\, \e^{2 i \theta } & 6 
\end{array}
\right],
$$
which has eigenvalues $\frac{15}{2}\pm\frac{3}{2}\sqrt{17}$  independent of $\theta$. By Theorem \ref{Teorema2}, the set $W^J(A)$ is a non-degenerate hyperbolic disc 
with foci at $(8,1)$ and $(4,-1)$, non-transverse axis of length $$\sqrt{\frac{15}{2}+\frac{3}{2}\sqrt{17}}\,;$$
and there are two nested hyperbolic components in $C^J(A)$ as Figure 7 shows.

\begin{figure}[!h]
\centering
\includegraphics[scale=0.7]{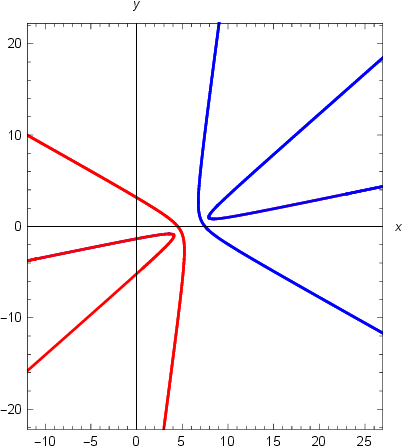}
\caption{Boundary generating curve of $W^J(A)$ in Example \ref{ex7}}
\end{figure}
\end{Exa}


\section{Results for certain tridiagonal matrices}

\medskip

In this section, we  consider ${\tilde J}={\rm diag}(1,-1,1,-1,\dots)\in M_n$  and  focus on the class of tridiagonal matrices with  biperiodic main diagonal  of the form $(\alpha,\beta,\alpha,\beta, \dots)\!\in{\mathbb C}^n$,  denoted by
  $T_{n}(\alpha, \beta; {\bf c},{\bf d})$, where ${\bf c}=(c_1,\ldots,c_{n-1})$ and ${\bf d}=(d_1,\ldots, d_{n-1})\in {\mathbb C}^{n-1}$ 
are, res\-pectively,  the first  lower and first upper subdiagonals of the matrix.

By  \cite[Lemma 2.1]{ELA},  the ${\tilde J}$-numerical range of these tridiagonal matrices is invariant under  switching of any two corresponding off-diagonal entries  $c_j$ and $d_j$.
Consider $K \subset \{1,\dots,n-1\}$ and ${\tilde T}$ the matrix obtained from  $T_{n}(\alpha, \beta; {\bf c},{\bf d})$ by interchanging  $c_j$ and $d_j$ for  $j\in K$.
Let  $r=\ceil[bbig]{\frac{n}{2}}$  and  $P$ be the permutation matrix associated with the permutation $\pi\in S_n$ defined by
\begin{equation}\label{permut}
   \pi (i)=2i-1, \quad 1 \leq i \leq r,\qquad \hbox{and}\qquad \pi \left(r+i\right)=2i, \quad 1 \leq i \leq n-r.
\end{equation}
Clearly, 
 ${\tilde J} = P^T J P$ and ${\tilde T}=P^TA\,P$ for
$J=I_r\oplus-I_{n-r}$ and  $A$ a block matrix of type (\ref{bloco}), where $n=2r$ or $n=2r-1$. Therefore, $W^{\tilde J}({\tilde T})=W^{J}(A)$ and the results of the previous sections may be applied to conveniently derive corresponding characterizations of the indefinite Kippenhahn curve and ${\tilde J}$-numerical range of tridiagonal matrices of type $\tilde T$.
 
The next corollary exemplifies this situation.
It was obtained in \cite[Theorem~3.1]{BLS_Hyp} when ${\bf a}=(a, -a, a, -a, \dots )$, $a\in \mathbb R$ (see also \cite{ELA}). 

For simplicity, let
$\omega=\frac{1}{2}(\alpha -\beta)$ 
and consider the bidiagonal matrix $X({\bf c})$ defined,  in terms of the entries of  the vector ${\bf c}=(c_1, \dots, c_{n-1})$,  by either
\begin{equation}\label{biD}
\left[\begin{array}{ccccc}
c_1  &   & &  &\\
c_2 & c_3 &   &  &  \\
 & c_4 & c_5 &   &  \\
 &  & \ddots & \ddots &  \\
 &   &   &  c_{n-2} & c_{n-1}
\end{array}\right] \quad \ \hbox{or} 
\quad \ \left[\begin{array}{cccc}
c_1  &  &  &  \\
c_2 & c_3 &  & \\
 & \ddots &  \ddots  & \\
 &  &   c_{n-3} & c_{n-2}\\
 &  &    & c_{n-1} 
\end{array}\right],
\end{equation}
according to $n$ even or $n$ odd, with zeros at the remaining (ommitted) entries.
  
\medskip\smallskip

\begin{corol}  Let $T=T_{n}(\alpha, \beta;  {\bf c},{\bf d})$ 
with ${\bf d}=\kappa\,\overline{{\bf c}}$, $\kappa \in \mathbb C$.
Consider $s_1 \geq \ldots \geq s_{\lfloor \frac{n}{2}\rfloor}$ the singular values 
and $k$ the rank of  $X({\bf c})$. 
Let ${\cal H}_j$ be the hyperbola
with foci at 
$$ 
\frac{1}{2}(\alpha+\beta) \pm  \sqrt{\omega^2 +\kappa\,s^2_j\,} 
$$ 
and non-transverse axis of length 
$$ 
  \sqrt{2\,\big|\omega^2+\kappa\,s_j^2\big|-2\,|\omega|^2+(|\kappa|^2+1)s_j^2}.
$$ 
Then $C^J(T)$ consits of the nested hyperbolas ${\cal H}_1, \dots, {\cal H}_k$ and possibly the points $\alpha,\beta$, if $k<\floor[big]{\frac{n}{2}}$,  or the point $\alpha$, if $n$ is odd and $k=\floor[big]{\frac{n}{2}}$, 
 if and only if
\begin{equation}\label{condTrid1}
 \left||\,\omega|^2-\frac{|\kappa|^2+1}{2}s_1^2\right|\, <\,  \big|\,\omega^2+\kappa\,s_1^2 \big|.
\end{equation}
 In this case,  $W^{\tilde J}(T)$ is 
bounded by the non-degenerate hyperbola ${\cal H}_1$.
 \end{corol}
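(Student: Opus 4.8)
The plan is to reduce the tridiagonal matrix $T=T_n(\alpha,\beta;\mathbf c,\mathbf d)$ to a block matrix of the form \eqref{bloco} via the permutation $\pi$ in \eqref{permut}, and then apply Corollary \ref{corol DC escalar} --- or more precisely the mechanism behind Theorem \ref{principal} --- after identifying the relevant blocks $C,D$ and computing the eigenvalues of $DC$ and $C^*C+DD^*$. First I would invoke the switching invariance from \cite[Lemma 2.1]{ELA} together with the similarity $\tilde T=P^TAP$, $\tilde J=P^TJP$ to pass from $W^{\tilde J}(T)$ to $W^J(A)$ for a block matrix $A$ of type \eqref{bloco} with $n=2r$ or $n=2r-1$. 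The key observation is that, under the interlacing of rows/columns by $\pi$, the off-diagonal blocks $C$ and $D$ of $A$ become (up to the switching freedom) the bidiagonal matrices $X(\mathbf c)$ and $X(\mathbf d)$ displayed in \eqref{biD}; since $\mathbf d=\kappa\,\overline{\mathbf c}$, one has $D=\kappa\,\overline{X(\mathbf c)}$. I would then compute $DC=\kappa\,\overline{X(\mathbf c)}\,X(\mathbf c)$, which after the permutation is exactly $\kappa$ times a matrix unitarily similar to $X(\mathbf c)^*X(\mathbf c)$ --- hence normal with eigenvalues $\kappa\,s_j^2$ --- and $C^*C+DD^*=X(\mathbf c)^*X(\mathbf c)+|\kappa|^2\,\overline{X(\mathbf c)}\,X(\mathbf c)^{T}$, which is simultaneously unitarily diagonalizable with eigenvalues $(1+|\kappa|^2)s_j^2$.

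With these identifications in hand, the hypotheses of Theorem \ref{principal} are met, with $z_j=\kappa\,s_j^2$ and $h_j=(1+|\kappa|^2)s_j^2$, so that $\Delta_j=(\alpha-\beta)^2+4\kappa s_j^2=4(\omega^2+\kappa s_j^2)$ and $f_{j,\pm}=\tfrac12(\alpha+\beta)\pm\sqrt{\omega^2+\kappa s_j^2}$, matching the stated foci; likewise the non-transverse axis $M_j$ from \eqref{eixos} becomes $\sqrt{\tfrac12|\Delta_j|-\tfrac12|\alpha-\beta|^2+h_j}=\sqrt{2|\omega^2+\kappa s_j^2|-2|\omega|^2+(1+|\kappa|^2)s_j^2}$, which is the claimed length of the axis of ${\cal H}_j$. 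The condition \eqref{cond hip 1} for index $j$ reads $\bigl||\alpha-\beta|^2-2h_j\bigr|<|\Delta_j|$, i.e. $\bigl||\omega|^2-\tfrac{1+|\kappa|^2}{2}s_j^2\bigr|<|\omega^2+\kappa s_j^2|$ after dividing by $4$; since $DC=\kappa\,(\,\cdot\,)$ is a scalar multiple of a fixed positive semidefinite matrix with all $z_j$ having the same argument $\mathrm{Arg}(\kappa)$, the hyperbolas share their foci's defining data essentially through $z_j$, and only $h_j$ (equivalently $s_j^2$) varies, so by the monotonicity argument of Corollary \ref{corol DC escalar} it suffices to require this inequality for the largest $s_1$. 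That gives \eqref{condTrid1}, and in that regime all the $\hat{\mathcal H}_j$ are nested with $W^{\tilde J}(T)=W^J(A)$ bounded by the outermost one ${\cal H}_1$. The rank count $k$ of $X(\mathbf c)$ controls how many $s_j$ are nonzero, hence how many genuine hyperbolas appear; the remaining zero singular values contribute, via Proposition \ref{lema1}(a) with $s_j(\theta)=0$, the points $\alpha$ and $\beta$ (or just $\alpha$ when $n$ is odd and $A$ has an extra $\alpha$-block row), exactly as stated.

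The step I expect to be the main obstacle is the bookkeeping in the first paragraph: verifying carefully that after applying $P$ (and the allowed $c_j\leftrightarrow d_j$ switches from \cite[Lemma 2.1]{ELA}) the off-diagonal blocks are precisely $X(\mathbf c)$ and $\kappa\,\overline{X(\mathbf c)}$ in the two parity cases of \eqref{biD}, including getting the indices in the two displayed bidiagonal patterns right, and confirming that $\overline{X(\mathbf c)}\,X(\mathbf c)$ and $X(\mathbf c)^*X(\mathbf c)$ indeed commute --- which they do, because the first is $\kappa^{-1}DC$ and the second is the $C$-part of $C^*C+DD^*$, and the whole point of the construction ($\mathbf d=\kappa\overline{\mathbf c}$) is to make $C^*C+DD^*$ a real-linear combination of $X(\mathbf c)^*X(\mathbf c)$ and $\overline{X(\mathbf c)}\,X(\mathbf c)^T$ that still commutes with $DC$. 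Once this structural fact is pinned down, the rest is a direct substitution into Theorem \ref{principal} and Corollary \ref{corol DC escalar}, with no further analytic difficulty.
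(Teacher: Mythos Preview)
Your overall strategy---permute to block form via $P$, invoke the switching invariance, then apply Theorem~\ref{principal}---is exactly the paper's, but there is a concrete error in the block identification that derails the argument. After the permutation $\pi$, the lower-left block $D$ of $A$ is \emph{upper} bidiagonal, not lower bidiagonal like $X(\mathbf d)$; in other words $D=X(\mathbf d)^{T}$, not $D=X(\mathbf d)$. With $\mathbf d=\kappa\,\overline{\mathbf c}$ this gives
\[
D \;=\; X(\kappa\,\overline{\mathbf c})^{T}\;=\;\kappa\,\overline{X(\mathbf c)}^{\,T}\;=\;\kappa\,X(\mathbf c)^{*}\;=\;\kappa\,C^{*},
\]
not $D=\kappa\,\overline{X(\mathbf c)}$ as you wrote. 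This single transpose is the whole point: with $D=\kappa C^{*}$ one has $DC=\kappa\,C^{*}C$ and $C^{*}C+DD^{*}=(1+|\kappa|^{2})C^{*}C$, both scalar multiples of the \emph{same} positive semidefinite matrix, so normality of $DC$ and commutation with $C^{*}C+DD^{*}$ are immediate, and $z_j=\kappa s_j^{2}$, $h_j=(1+|\kappa|^{2})s_j^{2}$ drop out.

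Your version $D=\kappa\,\overline C$ is not a harmless variant: then $DC=\kappa\,\overline C\,C$, which is generally \emph{not} normal. For instance with $C=X(\mathbf c)=\begin{pmatrix}1&0\\ i&2\end{pmatrix}$ one gets $\overline C\,C=\begin{pmatrix}1&0\\ i&4\end{pmatrix}$, which is not normal and has eigenvalues $1,4$, while the squared singular values of $C$ are $3\pm\sqrt5$; so neither the normality claim nor the eigenvalue claim ``$\kappa s_j^{2}$'' survives. The passage where you say $\overline{X(\mathbf c)}X(\mathbf c)$ is ``unitarily similar to $X(\mathbf c)^{*}X(\mathbf c)$'' and commutes with it is therefore unfounded, and this is precisely the step you flagged as the main obstacle. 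Also, Corollary~\ref{corol DC escalar} does not apply here since $DC$ is not a scalar matrix; the paper works directly from Theorem~\ref{principal}, noting that with $z_j=\kappa s_j^{2}$ and $h_j=(1+|\kappa|^{2})s_j^{2}$ both monotone in $s_j^{2}$, the hyperbolas are nested and it suffices to impose \eqref{cond hip 1} at $j=1$, which is exactly \eqref{condTrid1}.
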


\begin{proof}  Let  ${\tilde T}=T_n(\alpha, \beta;\widetilde{\bf c},\widetilde{\bf d})$, $\widetilde{\bf c}=(c_1,\kappa \overline{c_2},c_3,\ldots)$ and $\widetilde{\bf d}=(\kappa  \overline{c_1},c_2,\kappa \overline{c_3},\ldots)$. 
Considering
$J=I_r\oplus-I_{n-r}$, $r=\ceil[bbig]{\frac{n}{2}}$, and 
$A$ the block matrix in (\ref{bloco}) with $C=X({\bf c})$ and $D=\kappa\,C^*$, having in mind the previous considerations, 
we may conclude that $$W^{\tilde J}(T)\,=\,W^{\tilde J}(\tilde {T})\,=\, W^{J}(A).$$ 
It is obvious that $DC=\kappa\,C^*C$  and $C^*C+DD^*=(1+|\kappa|^2)\,C^*C$ are under  the  hypothesis of Theorem \ref{principal} and have, respectively, the eigenvalues 
$$z_j=\kappa s^2_j  \qquad\hbox{and}\qquad h_j=(1+|\kappa|^2)\,s^2_j, \qquad  j=1, \dots,k,$$ all the remaining being zero if $k<\floor[big]{\frac{n}{2}}$.
The result easily follows from Theorem~\ref{principal}, observing that $z_1\geq \dots \geq z_k$ and $h_1 \geq \cdots \geq h_k$ allow us to conclude that the hyperbolas ${\cal H}_1, \dots, {\cal H}_k$ are nested and non-degenerated if and only if (\ref{condTrid1}) holds; the outer one ${\cal H}_1$ forms the hyperbolic boundary of $W^{\tilde J}(T)$.
\end{proof}

\medskip


\section{Final  comments}

\smallskip

We have investigated  classes of $2$-by-$2$ block matrices $A$, 
 with indefinite boundary generating curve of $W^J(A)$ consisting of concentric hyperbolas, and eventually points, which generalize some previous known results. In our study, the diagonal blocks of these matrices $A$ are scalar multiples of the identity.
 Using a unified approach, cases when arcs of hyperbolas and flat portions appear on the boundary of the Krein space numerical range were presented, as well as cases when the set has precisely a hyperbolic boundary.
The main theorem was further applied to reobtain a result concerning some tridiagonal matrices with biperiodic main diagonal and other characterizations for the Krein space numerical range of these type of matrices could be analogously derived.

There is still much to investigate, other  block matrices or structured matrices could be considered, the case of non-concentric hyperbolas could be studied, and corresponding flat portions appearing at the boundary of $W^J(A)$ be characterized, nevertheless  much more involved computations that  lead to less elegant characterizations of the boundary generating curves of $W^J(A)$ are expected.

\subsection*{Acknowledgment}

The  first author was partially supported by the Centre for Mathema\-tics of the University of Coimbra (funded by the Portuguese Government through FCT/MCTES. 
The se\-cond author was supported by CIDMA under the Portuguese Foundation for Science and Technology 
(FCT, https://ror.org/00snfqn58) Multi-Annual Financing Program for R\&D Units.
The third author was financed by  CMAT-UTAD through the Portuguese Foundation for Science and Technology (FCT - Fundac\~ao para a Ci\^encia e a Tecnologia). 


\begin{thebibliography}{99}


\bibitem{azizov} T. Ya. Azizov and I. S. Iokhvidov, {\it Linear Operators in Spaces with an Indefinite Metric}, Nauka, Moscow, 1986, English Translation: Wiley, New York, 1989.

\bibitem{BLPS SIAM Proc}  
N. Bebiano, R. Lemos, J. da Provid\^encia and G. Soares, Remarks on gene\-ra\-li\-zed numerical ranges of operators on indefinite inner product spaces, \textit{Proceedings of the 8th SIAM Conference on Applied Linear Algebra}, The College of William and Mary, Williamsburg, VA, July 15--19, 2003.
https://archive.siam.org/meetings/la03/proceedings/bebianon.pdf



\bibitem{BLPS} N. Bebiano, R. Lemos, J. da Provid\^encia and G. Soares, 
On gene\-ralized nume\-ri\-cal ranges of operators on a indefinite inner product space, \textit {Linear Multilin. Algebra} {\bf 52}(3-4) (2004), 203--233.

\bibitem{BLP2004} N. Bebiano, R. Lemos and J. da Providência, Numerical ranges of unbounded operators arising in quantum physics, {\it Linear Algebra Appl.} {\bf 381} (2004), 259--279.

\bibitem{BNLPS2005} N. Bebiano,  R. Lemos, J. da Provid\^encia and G. Soares, On the geometry of numerical ranges in spaces with an indefinite inner product, {\it Linear Algebra Appl.}  {\bf 399} (2005), 17--34.

\bibitem{BLS_Hyp} N. Bebiano, R. Lemos and G. Soares, On the hyperbolicity of the Krein space numerical range, {\it Linear Multilin. Algebra} {\bf 72}(14) (2024), 2267--2287. 



\bibitem{BLSlast} N. Bebiano, R. Lemos and G. Soares, Matrices with hyperbolical Krein space numerical range. {\it Adv. Oper. Theory} {\bf 10}, 13 (2025), 1--19. 

\bibitem{BNLPS} N. Bebiano, H. Nakazato, R. Lemos, J. da Provid\^encia and G. Soares, Inequa\-li\-ties for $J$-Hermitian matrices, {\it Linear Algebra Appl.}  {\bf 407} (2005), 67--84.


\bibitem{BPNPS2015} N. Bebiano, J. da Providência, A. Nata and J. P. da Providência, An inverse indefinite numerical range problem, {\it Linear Algebra Appl.}  {\bf 470} (2015),  200--215.

\bibitem{ELA} N. Bebiano, J. da Provid\^encia, A. Nata and G. Soares,  Krein spaces numerical ranges and their computer generation,  {\it Electr. J. Linear Algebra} {\bf 17} (2008), 192--208.

\bibitem{ricardo} N. Bebiano, J. da Providência and R. Teixeira, Flat portions on the boun\-dary of the indefinite numerical range of $3\times 3$ matrices, {\it Linear Algebra Appl.}  {\bf 428} (2008), 2863--2879.

\bibitem{ricardo0} N. Bebiano, J. da Providência  and R. Teixeira,  Indefinite numerical range of $3\times 3$ matrices. {\it Czech Math. J.} {\bf 59} (2009), 221--239. 


\bibitem{ELA quadr} N. Bebiano and J. da Providência,  Numerical ranges of quadratic operators, {\it Electr. J. Linear Algebra} {\bf 26} (2013), 141--157.



\bibitem{OAM2013} C. C. Chang, H.-L. Gau, Y.-S. Wang, S.-C. Wu, and Y.-T. Yeh, Matrices
 with defect index one, {\it Operators and Matrices} {\bf 7}(4) (2013),  865--885.
 
 
\bibitem{Chien} M.-T. Chien  and K.-C. Hung, Elliptic numerical ranges of bordered matrices, {\it Taiwanese Journal of Mathematics}, {\bf 16}(3) (2012). 

\bibitem{Geryba} T. Geryba and I. M. Spitkovsky,  On the numerical range of some block matrices with scalar diagonal blocks,  {\it Linear  Multilin. Algebra}\, {\bf 69}(5) (2021), 772--785. 
   
\bibitem{LR} I. Gohberg, P. Lancaster and L. Rodman, {\it Matrices and Indefinite Scalar Pro\-ducts}, OT 8, Birk\"auser Verlag, Basel, 1983.

\bibitem{GLS} A. Guterman, R. Lemos and G. Soares, More on geometry of Krein space C-numerical range, {\it Appl. Math. Comput.} {\bf  352} (2019), 258--269.
 
\bibitem{Haus} F.  Hausdorff, Der Wertvorrat einer Bilinearform. {\it Math. Z.} {\bf 3} (1919), 314--316.

\bibitem{HJ} R. A. Horn and C. R. Johnson, {\it Matrix Analysis}, Cambridge University Press, 1985.


\bibitem{HJTopics} R. A. Horn and C. R. Johnson, {\it Topics in Matrix Analysis}, Cambridge University Press, 1991.




\bibitem{CTU} C.-K. Li, N.-K. Tsing and F. Uhlig, Numerical ranges of an operator on an indefinite inner product space, {\it Electr. J. Linear Algebra} {\bf 1} (1996), 1--17.

\bibitem{LR PAMS} C.-K. Li and L. Rodman, Remarks of numerical ranges of operators in spaces with an indefinite inner product, {\it Proc. Amer. Math. Soc.}  {\bf 126} (1998), 973--982.

\bibitem{LR ELA3} C.-K. Li and L. Rodman, Shapes and computer generation of numerical ranges of Krein space operators, {\it Electr. J. Linear Algebra} {\bf 3} (1998), 31--47.

 \bibitem{Li2011} C.-K. Li, Y.-T. Poon and M. Tominaga, Spectra, norms
 and numerical ranges of generalized quadratic operators, {\it Linear  Multilin. Algebra} {\bf
 59}(10)  (2011), 1077--1104. 



\bibitem{Linden} H. Linden, Containment regions for zeros of polynomials from numerical ranges of companion matrices, {\it Linear Algebra Appl.}, 
{\bf 350}(1-3) (2002), 125--145. 


\bibitem{PT} F. Philipp and  C. Trunk, The numerical range of non-negative operators in Krein spaces, 
 {\it Linear Algebra Appl.}  {\bf 438} (2013), 2542--2556.
 
\bibitem{Toep}  O. Toeplitz,  Das algebraische Analogon zu einem Satze von Fej\'er. {\it Math. Z.} {\bf 2} (1918), 187--197.     
  
\end{thebibliography}
\end{document}